\newtheorem{theorem}{Theorem}[section]
\newtheorem{lemma}[theorem]{Lemma}
\newtheorem{proposition}[theorem]{Proposition}
\newtheorem{fact}[theorem]{Fact}
\newtheorem{corollary}[theorem]{Corollary}
\newtheorem{conjecture}[theorem]{Conjecture}
\theoremstyle{definition}
\theoremstyle{remark}
\newtheorem{remark}[theorem]{Remark}
\numberwithin{equation}{section}
\begin{document}

\title{Linear operators with wild dynamics}

\author{Jean-Matthieu Aug\'{e}}
\address{Universit\'{e} Bordeaux 1
351, cours de la Lib\'{e}ration - F 33405 TALENCE cedex }
\email{jean-matthieu.auge@math.u-bordeaux1.fr}
\subjclass[2000]{Primary 47A05, Secondary 47A15, 47A16}

\keywords{Orbits of operators, compact operators}

\begin{abstract}
If $X$ is a separable infinite dimensional Banach space, we construct a bounded and linear operator $R$ on $X$ such that 
$$ A_R=\{x \in X, \|R^tx\| \rightarrow \infty\} $$ is not dense and has non empty interior with the additional property that $R$ can be written $I+K$, where $I$ is the identity and $K$ is a compact operator. This answers two recent questions of H\'ajek and Smith.
\end{abstract}

\maketitle

\section{Introduction}

Let $X$ be a Banach space (we will sometimes distinguish the real and complex cases) and $R$ a bounded linear operator on $X$. For $x$ in $X$, let 
$$O_R(x)=\{R^tx, t \geqslant 0\}$$ be the orbit of $x$ under the action of $R$. If there exists $x \in X$ such that $O_R(x)$ is dense in $X$, $R$ is called a hypercyclic operator (and $x$ a hypercyclic vector for $R$). This class of operator has been intensively studied in the last two decades, see \cite{BM} for a nice survey on this topic. Furthermore, the orbits are connected with the invariant subset problem which asks if there exists an operator on $X$ with non trivial invariant closed subset. Indeed, $R$ does not have any trivial invariant closed subset if and only if for each $x \neq 0$, $O_R(x)$ is dense in $X$. In the space $\ell^1$, Read \cite{R} constructed such an operator. But in the Hilbert space, the problem is still open. 
Here, we will be interested in the orbits which tend to infinity. The following is our main result.

\begin{theorem}\label{thm:main} Let $X$ be an infinite-dimensional separable, real or complex, Banach space.
There exists a linear continuous operator $R$ on $X$ such that, if we put 
$$A_R=\{x \in X, \|R^tx||\rightarrow \infty\} \; \; {\rm and} \; \; B_R=\{x \in X, \underline{\lim} \|R^tx-x\|=0\},$$
A) $A_R$ and $B_R$ have non-empty interior and $\{A_R, B_R\}$ form a partition of $X$. \\
B) $R$ can be written $I+K$, with $K$ a compact operator.
\end{theorem}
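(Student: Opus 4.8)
The plan is to produce $R$ as an explicit compact perturbation of the identity built from a coordinate system on $X$, designed so that one ``fat'' cone of vectors is caught in a recurrent regime and another ``fat'' cone is pushed to infinity, with nothing falling in between. First I would pass to coordinates: a separable infinite‑dimensional $X$ contains a normalised basic sequence (equivalently, carries a bounded Markushevich basis $(e_n,e_n^*)_{n\ge1}$ with $\overline{\operatorname{span}}\{e_n\}=X$). All operators below will be assembled from the $e_n$ and $e_n^*$, with compactness of the perturbation read off from norm‑convergence of finite‑rank pieces; the behaviour on general $x\in X$ is then recovered from finite combinations by a density/continuity argument, and the passage to an arbitrary $X$ (rather than the model span) is a standard reduction. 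In the real case one replaces each scalar rotation $e^{i\theta}$ by a $2\times2$ rotation block throughout.

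Next, the operator. I would split $\mathbb N$ into consecutive finite blocks $I_1,I_2,\dots$ of rapidly increasing length $d_n$ and take $R=I+K$, where the diagonal part of $R$ on $\operatorname{span}\{e_i:i\in I_n\}$ is a rotation through a rational angle $2\pi p_n/q_n$ with prescribed denominators $q_n\to\infty$, and $K$ additionally contains a small ``leak'', coupling block $I_n$ backward into block $I_{n-1}$ with tiny weights $w_n\to0$ (so that $K$, a norm‑limit of finite‑rank operators, is compact). Two competing mechanisms are thereby encoded. A \emph{recurrence engine}: choosing the rotation angles distinct forces each $R|_{\operatorname{span}\{e_i:i\in I_1\cup\dots\cup I_n\}}$ to be a finite‑dimensional, diagonalisable operator of order dividing $T_n:=\operatorname{lcm}(q_1,\dots,q_n)$, so that $R^{T_n}$ is the identity on that (invariant, increasing, dense‑in‑union) subspace; since any $x$ is approximated by its truncation to the first $n$ blocks, vectors concentrated enough on finitely many blocks satisfy $R^{T_{n_j}}x\to x$ and hence lie in $B_R$. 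A \emph{growth engine}: the backward leaks, together with the crowding of the angles near $1$, prevent $R$ from being power‑bounded, and for vectors whose mass is persistently spread over infinitely many blocks the resulting amplification cannot be cancelled, so $\|R^tx\|\to\infty$. The numerical parameters — the $d_n$, the $q_n$ (hence the enormous $T_n$), the precise angles, and the weights $w_n$ — must be fixed in a carefully nested order so that (a) the leaks are negligible on the first $n$ blocks over the time $T_n$, (b) $K$ is compact, and (c) the two regimes together exhaust $X$.

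Finally I would verify: (i) $\{A_R,B_R\}$ is a partition — the real point is that a non‑escaping vector is genuinely recurrent, which is exactly the recurrence engine applied along an appropriate subsequence of the $T_n$; (ii) $B_R$ has non‑empty interior — an open cone of ``sufficiently front‑loaded'' vectors recurs, uniformly on the cone; (iii) $A_R$ has non‑empty interior — a suitable translate of a large cone by a fixed growth direction escapes robustly; and (iv) $R=I+K$ with $K$ compact — immediate from the construction. I expect the main obstacle to be (ii) and (iii) \emph{together}: one must arrange that the escape is not generic, so that $B_R$ retains an interior (equivalently $A_R$ is not dense), while still occurring on a full open set, so that $A_R$ is not dense either — and this is precisely what forces the delicate, mutually constraining choice of all the parameters above, with the bookkeeping that the backward leaks stay harmless over the gigantic return times $T_n$ being the technical heart of the argument.
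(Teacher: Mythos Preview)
Your outline has the right scaffolding (biorthogonal system, rational-angle rotations giving recurrence at times $T_n$, compactness via norm-summable finite-rank pieces, $2\times2$ blocks in the real case), but the central mechanism --- how to make $\{A_R,B_R\}$ a partition with \emph{both} pieces having nonempty interior --- is missing, and your proposed substitute does not work. You want to separate vectors according to whether their mass is ``front-loaded'' or ``persistently spread over infinitely many blocks'', but neither of these is an open condition: finite support is dense with empty interior, and any condition on the tail of the coordinates is of $G_\delta$/$F_\sigma$ type, again without interior. Even if ``front-loaded'' is read as an honest open cone such as $\{|x_1|>c\|x\|\}$, there is no reason its complement should coincide with $A_R$, and the boundary region will be nonempty. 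Moreover, backward nearest-neighbour leaks with weights $w_n\to0$ supply no growth engine: the rotations are norm-preserving on each block and the leaks are small, so it is not clear why $\|R^tx\|\to\infty$ for \emph{any} $x$ in your scheme.

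The idea you are lacking is a \emph{finite-dimensional switch}. In the paper everything is decided by the $2$-dimensional projection $Px=\langle e_1^*,x\rangle e_1+\langle e_2^*,x\rangle e_2$. One fixes a closed union of complex lines $F\subset\operatorname{span}(e_1,e_2)$ with nonempty relative interior and nonempty complement, and proves an \emph{asymptotic separating lemma} producing linear forms $f_k$ on the plane with $|f_k(y)|\to\infty$ for $y\notin F$ and $\liminf_k|f_k(y)|=0$ for $y\in F$. The compact perturbation is then a \emph{forward} leak from $\operatorname{span}(e_1,e_2)$ into each $e_k$, of size $f_k(Px)/m_{k-1}$; under iteration the $k$-th coordinate of $R^tx$ acquires the factor $\sum_{l<t}\lambda_k^l$, whose modulus is of order $m_{k-1}$ throughout the window $m_{k-1}\le t\le m_k$. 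Since these windows cover all large $t$ and $|f_k(Px)|\to\infty$, this forces $\|R^tx\|\to\infty$ exactly when $Px\notin F$; when $Px\in F$ one works along the subsequence where $|f_{k_n}(Px)|\to0$ to get recurrence. Thus $A_R=P^{-1}(\operatorname{span}(e_1,e_2)\setminus F)$ and $B_R=P^{-1}(F)$ are preimages, under a bounded linear projection, of two planar sets each with nonempty interior --- which is what delivers both interiors at once. No amount of parameter-tuning in a block-by-block tail scheme replaces this device.
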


As a direct consequence, we obtain the following more general result.

\begin{corollary}\label{cor:main} The conclusion of Theorem \ref{thm:main}  remains true if one only assumes that $X$ has a closed and separable subspace of infinite dimension which is complemented in $X$.
\end{corollary}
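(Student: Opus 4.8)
The plan is to amplify the operator from Theorem \ref{thm:main} across the complemented decomposition. Write $X = Y \oplus Z$, where $Y$ is the given closed, separable, infinite-dimensional subspace and $Z$ is a closed complement, and let $P \colon X \to Y$ be the associated bounded projection. Since $Y$ is itself a separable infinite-dimensional Banach space over the same field, Theorem \ref{thm:main} provides an operator $R_0 = I_Y + K_0$ on $Y$, with $K_0$ compact, such that $\{A_{R_0}, B_{R_0}\}$ partitions $Y$ and both pieces have non-empty interior in $Y$. I would then define $R \colon X \to X$ by $R(y \oplus z) = R_0 y \oplus z$, equivalently $R = R_0 P + (I_X - P)$. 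This $R$ is bounded and linear, and $R = I_X + K$ with $K = K_0 P$; since $K_0$ is compact and $P$ is bounded, $K$ is compact, which gives part B).

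Next I would identify the dynamical sets. By construction $R^t(y \oplus z) = R_0^t y \oplus z$ for every $t \geqslant 0$. Using that the direct-sum norm on $X$ is equivalent to $\|y \oplus z\| \asymp \|y\| + \|z\|$ (the projections $P$ and $I_X - P$ being bounded), one checks that $\|R^t(y \oplus z)\| \to \infty$ if and only if $\|R_0^t y\| \to \infty$: the forward implication is immediate, and conversely if $\|R_0^t y\|$ does not tend to infinity it stays bounded along a subsequence, hence so does $\|R^t(y \oplus z)\|$. In the same way, $\underline{\lim} \|R^t(y \oplus z) - (y \oplus z)\| = \underline{\lim} \|R_0^t y - y\|$, so this lower limit vanishes precisely when $y \in B_{R_0}$. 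Therefore $A_R = P^{-1}(A_{R_0})$ and $B_R = P^{-1}(B_{R_0})$, and since $\{A_{R_0}, B_{R_0}\}$ is a partition of $Y$, $\{A_R, B_R\}$ is a partition of $X$.

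For the interior condition: if $U \subseteq Y$ is a non-empty open set contained in $A_{R_0}$, then $P^{-1}(U)$ is a non-empty (as $P$ is surjective) open (as $P$ is continuous) subset of $X$ contained in $A_R$; applying the same to an open set inside $B_{R_0}$ shows $B_R$ has non-empty interior. This yields part A) and completes the proof. The argument is essentially bookkeeping, so I do not expect a genuine obstacle; the only points needing a little care are the equivalence of norms under the decomposition, which is what lets the conditions ``$\to \infty$'' and ``$\underline{\lim} = 0$'' transfer between $X$ and $Y$, and the compactness of $K_0 P$, both of which are routine.
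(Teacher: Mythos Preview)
Your argument is correct and follows essentially the same approach as the paper: decompose $X$ along the complemented separable subspace, apply Theorem~\ref{thm:main} on that piece, and extend by the identity on the complement. The paper's proof is terser (it simply asserts $A_R = A_{R_0} + Z$, $B_R = B_{R_0} + Z$ and that the required properties are clear), but your more explicit verification of compactness, the transfer of the dynamical conditions via the direct-sum norm, and the interior claim fills in exactly those details.
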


\begin{proof} We write $X=X_0 \oplus Y$ where $X_0 \subset X$ is a closed separable Banach space of infinite dimension and $Y$ is a closed subspace of $X$. We then consider $R=R_0 \oplus I$ where $R_0$ is an operator on $X_0$ which satisfies Theorem \ref{thm:main}. Since $R^t=R_0^t \oplus I$, we get $A_{R}=A_{R_0}+Y$, $B_{R}=B_{R_0}+Y$ and it is clear that $R$ has the required properties.
\end{proof}

Theorem \ref{thm:main}  is motivated by a conjecture of Pr{\v{a}}jitur{\v{a}} \cite{P} which was recently solved negatively by H\'ajek and Smith \cite{HS}:

\begin{conjecture}Let $R$ be an operator on a Banach space, then $A_R$ is either empty or dense. 
\end{conjecture}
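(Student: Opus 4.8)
The plan is to construct $R$ directly in the form $I+K$ with $K$ compact, arranging the finite‑dimensional ``pieces'' of $K$ so that whether $\|R^{t}x\|\to\infty$ is decided by a homogeneous ``profile'' condition on $x$, and then reading off parts A) and B) from the construction.

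\emph{Orienting observations.} Both $A_{R}$ and $B_{R}$ are cones (stable under nonzero scalar multiplication), $0\in B_{R}$, and they are automatically disjoint, since $\|R^{t}x\|\to\infty$ rules out $\underline{\lim}\,\|R^{t}x-x\|=0$. Thus A) splits into two tasks: each of the two behaviours must occur on a nonempty \emph{open} cone, and the two cones must \emph{exhaust} $X$. The second task is the stringent one: an orbit that is bounded but not recurrent, or that tends to $0$, would lie in neither set, so the non‑escaping part of the dynamics is forced to be asymptotically isometric, and in fact recurrent. In particular the escape cannot be produced by a genuine expansion acting uniformly on a finite‑dimensional invariant subspace $V$: if $R|_{V}$ has a modulus‑$>1$ eigenvalue, or a nontrivial Jordan block on the unit circle, then escape is \emph{dense} in $V$ and recurrence is nowhere dense there; in the complementary case $R|_{V}$ is semisimple on the circle plus a strict contraction, and the recurrent vectors fill only a proper subspace of $V$. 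Either way ``both behaviours with interior'' cannot happen inside one block, so the escape must be a cumulative effect of infinitely many vanishingly small contributions --- which is precisely what makes $K=R-I$ compact.

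\emph{The construction.} Fix a Markushevich basis $(u_{n},u_{n}^{*})$ of $X$ with $\|u_{n}\|=1$ and partition $\mathbb{N}$ into consecutive finite blocks $I_{1}<I_{2}<\cdots$; on each ``stage'' $E_{n}=\operatorname{span}(u_{k}:k\in I_{n})$ let $R$ act as $I$ plus a ``motor'' $K_{n}$ of small norm $\varepsilon_{n}$ that, up to a uniform factor, is isometric and almost periodic on $E_{n}$ (so that its own orbits recur), together with a ``leak'' of tiny weight $\delta_{n}$ carrying $E_{n}$ into $E_{n+1}$. Absorbing the norms $\|u_{k}^{*}\|$ into the weights makes $\|K_{n}\|=O(\max(\varepsilon_{n},\delta_{n}))$, so $K=\sum_{n}K_{n}$ is compact and $R=I+K$ is bounded; moreover no finite union of stages is $R$‑invariant, so escape cannot be confined to one stage. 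Following mass along the staircase via $R^{t}=\sum_{j}\binom{t}{j}K^{j}$, the $E_{n+1}$‑part of $R^{t}x$ receives from $E_{n}$ a contribution whose leading order in $t$ has the shape $\binom{t}{d_{n}}\varepsilon_{n}^{d_{n}-1}\delta_{n}\,\ell_{n}(x)$ for a linear functional $\ell_{n}$, where $d_{n}=|I_{n}|$. One then picks $d_{n}$, $\varepsilon_{n}$, $\delta_{n}$ (and an auxiliary reference sequence) so that these contributions telescope to $+\infty$ for $x$ in a suitable open cone $\mathcal{U}$ --- cut out by a homogeneous condition comparing the sizes of the $E_{n}$‑components of $x$ --- while for $x$ outside a slightly smaller open cone they cancel and remain square‑summable in $t$, and simultaneously the motors bring $R^{t_{k}}x$ to within $o(1)$ of $x$ along a sparse sequence $t_{k}$ built from the common almost‑periods of the $R_{n}$. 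This places $\mathcal{U}$ inside $A_{R}$ and the complementary open cone inside $B_{R}$; both have nonempty interior, which is A), and B) is immediate since $K$ is compact by construction.

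\emph{Main obstacle.} The crux is the exhaustion in A): one must show that the two open cones actually cover $X$ --- that every ``borderline'' $x$, for which the comparison is an equality, still escapes or recurs, that no orbit drifts to infinity without a genuine limit, and that outside $\mathcal{U}$ the returns are recurrence ($\underline{\lim}\,\|R^{t}x-x\|=0$) and not mere boundedness. All of this rests on estimating the partial sums $\sum_{j}\binom{t}{j}K^{j}x$ \emph{uniformly in $t$}, which is delicate because the $\binom{t}{j}$ are unbounded in $t$ whereas $K$ is only ``almost nilpotent'' stage by stage; reconciling the competing requirements --- the parameters decreasing fast enough for compactness and for the off‑resonant contributions to be negligible, yet slowly enough for the resonant ones to truly diverge --- is the technical heart, and I expect it to force a Read‑type \cite{R} inductive construction in which each stage is fixed only after the finitely many preceding ones.
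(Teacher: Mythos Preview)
Your proposal is a strategy sketch, not a proof, and the gap is exactly where you locate it yourself: you never produce a concrete ``homogeneous profile condition'' and never verify that the resulting cones actually \emph{partition} $X$. Saying ``I expect it to force a Read--type inductive construction'' is an admission that the central step is undone. More specifically, your staircase architecture (stage $E_{n}$ leaking into $E_{n+1}$) makes the decision depend on \emph{all} coordinates of $x$ simultaneously, via products $\delta_{1}\cdots\delta_{n}$ weighted against $\binom{t}{n}$; it is not at all clear that any comparison of component sizes across infinitely many stages can cut $X$ into two \emph{open} pieces with no residual boundary. Your own ``orienting observations'' already warn that borderline vectors are lethal, yet the scheme offers no mechanism to eliminate them.

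The paper avoids this difficulty by an idea absent from your outline: the membership $x\in A_{R}$ versus $x\in B_{R}$ is made to depend \emph{only} on a fixed two--dimensional projection $Px\in\operatorname{span}(e_{1},e_{2})$. One chooses a closed union of complex lines $F\subset\operatorname{span}(e_{1},e_{2})$ with nonempty interior and nonempty complement, and proves an ``asymptotic separating lemma'' producing linear forms $f_{k}$ on $\operatorname{span}(e_{1},e_{2})$ with $|f_{k}(y)|\to\infty$ for $y\notin F$ and $\underline{\lim}|f_{k}(y)|=0$ for $y\in F$. The operator is then $Rx=Sx+\sum_{k\geqslant 3}\frac{1}{m_{k-1}}f_{k}(Px)e_{k}$, where $S$ is diagonal with eigenvalues $e^{i\pi/m_{k}}$ (carefully chosen roots of unity). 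All the ``leaks'' feed from the \emph{same} two--dimensional source rather than cascading stage to stage, and the iterates $R^{t}x$ have a closed form involving the geometric sums $\lambda_{k,t}=\sum_{l=0}^{t-1}\lambda_{k}^{l}$, not the binomial coefficients you propose. The partition $A_{R}=\{Px\notin F\}$, $B_{R}=\{Px\in F\}$ is then automatic: there is no borderline because $F$ and its complement already partition the plane, and both preimages are open because $F$ has nonempty interior and nonempty exterior. Nuclearity of $R-I$ follows at once from $\sum|\lambda_{k}-1|<\infty$ and $\sum\frac{\|f_{k}\|}{m_{k-1}}<\infty$.

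In short, the missing ingredient is the reduction to a finite--dimensional decision via the separating lemma; without it, your exhaustion problem is genuinely open, and the binomial--expansion bookkeeping you anticipate would be both harder and, as far as one can tell from the sketch, unnecessary.
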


H\'ajek and Smith showed part A) of Theorem \ref{thm:main}  when $X$ has a symmetric basis. Therefore, they asked if the theorem could be true for a space which has only an unconditional basis (which is a weaker property than having a symmetric basis). We show that in fact no hypothesis of basis existence is necessary. Furthermore, a second question they asked was: is it possible to choose the operator $R$ of the form $I+K$, with $K$ a compact operator? The reason for this question is that they observed that for a compact operator $K$ and more generally, for a strictly singular operator $S$ (that is an operator which is not an isomorphism when restricted to any infinite-dimensional subspace), $A_S$ was either empty or dense. On the other hand, there exist some spaces with very few operators: Gowers and Maurey \cite{GM} constructed a Banach space where each operator has the form $\lambda I +S$, with $S$ strictly singular and even more recently, Argyros and Haydon \cite{AH} found a space where each operator has the form $\lambda I+K$, with $K$ compact. Thus, if some  operators satisfying part A) of Theorem \ref{thm:main} exist on any separable Banach space, it must be true that they can take the form $\lambda I+K$, and we show that it is indeed possible. \\

Note that if $(\|R^t\|)_{t \geqslant 0}$ is unbounded, by the uniform boundedness principle, 
$$ A'_R=\{x \in X, (\|R^tx\|)_{t \geqslant 0}\;\; {\rm is \;\; unbounded}\} $$ is a dense $G_{\delta}$ in $X$. So for any operator $R$, $A'_R$ is either dense or empty. However, this does not show anything for $A_R$ since there exist some operators such that $\|R^t\| \rightarrow \infty$ but for all $x$, $\|R^tx\| \nrightarrow \infty$. For example it is the case for the following weighted backward shift \cite{HS} defined on $\ell^p$ ($1 \leqslant p <\infty$):
$$ Be_i= \begin{cases}
\left(\dfrac{i}{i-1}\right)^{\frac{1}{p}}e_{i-1} \; \; {\rm if} \; \; i>1. \\
0 \; \; {\rm if} \; \; i=1.
\end{cases} \\ $$
However, M\"uller and Vr{\v{s}}ovsk\'y \cite{MV} showed that if $\|R^t\|$ grows fast enough, namely if $\sum_{k=1}^{\infty} \frac{1}{\|R^k\|}<\infty$ then $A_R$ is dense. \\

Let us explain the organization of this paper. Section 2 is devoted to a preliminary result: we prove a separating lemma in finite dimension (which might be of independant interest). We then use it in section 3 to construct an operator satisfying Theorem \ref{thm:main}  (we treat separately the complex and real cases). In section 4, we present a kind of version of the operator constructed by H\'ajek and Smith with a slightly simplified proof. In section 5, we make some comments on the two operators and we prove that in fact, there are many operators which fail Pr{\v{a}}jitur{\v{a}}'s conjecture.  \\ 

Let us settle some notation: if $X$ is a Banach space, we denote by $\mathcal{L}(X)$ the set of linear and bounded operators on $X$, by $\mathcal{G}\mathcal{L}(X) \subset \mathcal{L}(X)$ the set of invertible operators and by $\mathcal{A}_X$ the set of operators which satisfy part A) of Theorem \ref{thm:main}. Until the end of the paper $X$ will always be assumed to be infinite dimensional (if $X$ is finite dimensional, Pr{\v{a}}jitur{\v{a}}'s conjecture is true, see \cite{HS}).

\section{An asymptotic separating lemma in finite dimension}
Here $d \geqslant 2$ is an integer and $\mathbb{K}=\mathbb{R}$ or $\mathbb{C}$.

\begin{proposition}\label{prop:sep} Let $H \neq \emptyset$ be a closed subset of $\mathbb{K}^d$ such that $H$ is a union of linear hyperplanes. Then, there exists a sequence of linear forms $(f_n)$ on $\mathbb{K}^d$ such that \\
1) For all $x \notin H$, $\lim |f_n(x)|=\infty$. \\
2) For all $x \in H$, $\underline{\lim} |f_n(x)|=0.$
\end{proposition}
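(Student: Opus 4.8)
The plan is to translate the hypothesis on $H$ into a statement about a \emph{compact} set of normalized linear forms, and then to assemble $(f_n)$ out of finite blocks built from $\varepsilon$-nets of that set. Concretely, set
$$\widetilde P=\bigl\{\psi\in(\mathbb K^d)^{*}:\ \|\psi\|=1,\ \ker\psi\subseteq H\bigr\}.$$
Since $H\neq\emptyset$ is a union of hyperplanes it contains at least one hyperplane, so $\widetilde P\neq\emptyset$; and $\bigcup_{\psi\in\widetilde P}\ker\psi=H$. The first thing to check is that $\widetilde P$ is closed in the unit sphere of $(\mathbb K^d)^{*}$, hence compact: if $\psi_j\to\psi$ with $\psi_j\in\widetilde P$ and $y\in\ker\psi$, one corrects $y$ by a small multiple of a fixed vector on which $\psi$ does not vanish to obtain $y_j\in\ker\psi_j\subseteq H$ with $y_j\to y$, and then $y\in H$ because $H$ is closed.

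The heart of the matter is the following consequence of compactness: for every $x\notin H$ one has $\delta(x):=\inf_{\psi\in\widetilde P}|\psi(x)|>0$. Indeed, otherwise a sequence in $\widetilde P$ along which $|\psi(x)|\to0$ would, by compactness of $\widetilde P$, subconverge to some $\psi\in\widetilde P$ with $\psi(x)=0$, i.e. $x\in\ker\psi\subseteq H$, a contradiction. This is the step I expect to be the real obstacle; the rest is bookkeeping.

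Now for each integer $m\geq1$ choose a finite $\tfrac1m$-net $N_m\subseteq\widetilde P$ of $\widetilde P$ (possible since $\widetilde P$ is totally bounded), and define $(f_n)_{n\geq1}$ to be the concatenation of the finite blocks $B_m=\bigl(\sqrt m\,\psi\bigr)_{\psi\in N_m}$, $m=1,2,\dots$. For $x\notin H$, every $f_n$ lying in $B_m$ satisfies $|f_n(x)|=\sqrt m\,|\psi(x)|\geq\sqrt m\,\delta(x)$, and since the block index $m$ tends to $\infty$ as $n\to\infty$, we get $|f_n(x)|\to\infty$, which is 1). For $x\in H$, pick $\phi\in\widetilde P$ with $x\in\ker\phi$; for each $m$ there is $\psi_m\in N_m$ with $\|\psi_m-\phi\|<\tfrac1m$, and the corresponding form $\sqrt m\,\psi_m$ in $B_m$ satisfies
$$\sqrt m\,|\psi_m(x)|=\sqrt m\,|(\psi_m-\phi)(x)|\leq\sqrt m\,\|\psi_m-\phi\|\,\|x\|<\frac{\|x\|}{\sqrt m}\longrightarrow 0$$
as $m\to\infty$, so $\underline\lim_n|f_n(x)|=0$, which is 2).

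The only quantitative choice that matters here is the balance between the multiplier $\sqrt m$ and the mesh $\tfrac1m$ of the net: any scalars $a_m\to\infty$ and $\varepsilon_m\to0$ with $a_m\varepsilon_m\to0$, used as block multipliers and net radii respectively, would work equally well.
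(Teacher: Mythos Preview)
Your proof is correct. The approach is close in spirit to the paper's but dualized in a pleasant way: the paper works on the \emph{primal} side, taking $\varepsilon$-nets of $C=H\cap S_{\mathbb K^d}$ and then, for each net point $u_n$, selecting a form $f_n$ with $u_n\in\ker f_n\subset H$ and $\|f_n\|=n^{1/2(d-1)}$; the divergence for $x\notin H$ comes from $|f_n(x)|\geq\|f_n\|\,d(x,H)$, and the $\underline\lim$ claim from the net approximation of $x/\|x\|$ by some $u_{p_n}$. You work on the \emph{dual} side, netting directly the compact set $\widetilde P$ of unit forms with kernel inside $H$, which makes the argument shorter and avoids the ``choose a hyperplane through $u_n$'' step. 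Your closedness argument for $\widetilde P$ is correct, and your $\delta(x)$ is exactly the paper's $d(x,H)$, since $|\psi(x)|=d(x,\ker\psi)$ for unit $\psi$ and $H=\bigcup_{\psi\in\widetilde P}\ker\psi$. The one thing the paper's version buys that yours does not make explicit is a concrete polynomial bound $\|f_n\|\le n^{1/2(d-1)}$; this is irrelevant for the proposition as stated, but it is used downstream (Section~3) to guarantee convergence of series like $\sum m_{k-2}m_{k-1}^{-1}\|f_k\|$. Your construction also yields polynomial growth of $\|f_n\|$ once one bounds $|N_m|$ by $O(m^{c})$ via the standard covering estimate for the sphere, so this is easy to add if needed.
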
 

We will do the proof in the real case. Let us denote by $S=S_{\mathbb{R}^d}$ the unit sphere of $\mathbb{R}^d$ and put $C=S \cap H$. We first prove the following lemma. 

\begin{lemma}\label{lem:net} There exists a constant $K>0$ and a sequence $(u_n) \subset C$ such that for each $x \in C$, there exists a sequence of integers $(p_n)$ which goes to infinity, and satisfies for each $n$, $p_n \leqslant n$ and $\|x-u_{p_n}\| \leqslant \frac{K}{n^{1/d-1}}$.
\end{lemma}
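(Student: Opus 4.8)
The plan is to build the sequence $(u_n)$ by concatenating, for $k=1,2,\dots$, finite $2^{-k}$-nets of $C$ whose sizes grow at the rate dictated by the metric entropy of the sphere, and then, given $x\in C$, to take $p_n$ inside the block \emph{preceding} the one in which the index $n$ currently lies. The requirement $p_n\to\infty$ is then automatic, since one always points back to an earlier block.

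First I would isolate the only geometric ingredient. Since $H$ is a nonempty union of linear hyperplanes and $d\ge2$, the set $C=S\cap H$ is nonempty, compact, and contained in the unit sphere $S\subset\mathbb R^d$; I claim there is a constant $c_d>0$ so that for every $\varepsilon\in(0,1]$ there is $N_\varepsilon\subset C$ with $\#N_\varepsilon\le c_d\,\varepsilon^{-(d-1)}$ such that every point of $C$ lies within $\varepsilon$ of $N_\varepsilon$. For this, take $N_\varepsilon$ to be a maximal $\varepsilon$-separated subset of $C$: maximality makes it an $\varepsilon$-net, while the balls $B(u,\varepsilon/2)$, $u\in N_\varepsilon$, are pairwise disjoint and, each $u$ being a unit vector, all lie in the shell $\{\,1-\varepsilon/2<|y|<1+\varepsilon/2\,\}$, whose Lebesgue measure is at most a constant multiple of $\varepsilon$; comparing volumes gives the bound. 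The exponent $d-1$, rather than $d$, is crucial, and is exactly where one uses that $C$ sits on the sphere.

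Then I would set $N_k:=N_{2^{-k}}$ and let $(u_n)$ list the points of $N_1$, then $N_2$, then $N_3$, and so on; put $M_0=0$ and $M_k=\#N_1+\dots+\#N_k$, so that block $N_k$ occupies indices $M_{k-1}+1,\dots,M_k$, the $M_k$ increase strictly to $\infty$ (each block is nonempty), and $M_k\le C_1\,2^{k(d-1)}$ for a constant $C_1=C_1(d)$ by summing the geometric bound. Now fix $x\in C$. For $n>M_1$ let $k=k(n)\ge2$ satisfy $M_{k-1}<n\le M_k$; since $n>M_{k-1}$, the block $N_{k-1}$ occurs entirely among $u_1,\dots,u_{n-1}$, so let $p_n$ be the index of a point of $N_{k-1}$ within $2^{-(k-1)}$ of $x$. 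Then $M_{k-2}<p_n\le M_{k-1}<n$, so $p_n<n$; as $n\to\infty$ one has $k(n)\to\infty$, hence $p_n>M_{k(n)-2}\to\infty$; and $\|x-u_{p_n}\|\le2^{-(k-1)}\le (C_1 2^{d-1})^{1/(d-1)}\,n^{-1/(d-1)}$ because $n\le M_k\le C_1 2^{d-1}\,2^{(k-1)(d-1)}$. For the finitely many $n\le M_1$ one takes $p_n=1$ and uses $\operatorname{diam}C\le2$. Hence $K:=\max\{\,(C_1 2^{d-1})^{1/(d-1)},\,2M_1^{1/(d-1)}\,\}$ works; since $M_1\le c_d 2^{d-1}$, this $K$ may even be chosen to depend only on $d$.

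The covering estimate and the estimate on $M_k$ are routine; the only delicate point is the bookkeeping of indices. The block sizes must grow at exactly the packing rate $2^{k(d-1)}$, so that at the moment $n$ enters block $k$ the previous block $N_{k-1}$ is simultaneously entirely indexed below $n$, a net of $C$ at a scale comparable to $n^{-1/(d-1)}$, and located at indices tending to $\infty$ with $n$. Once the blocks are chosen this way, the three conclusions — $p_n\le n$, the distance bound, and $p_n\to\infty$ — come out together.
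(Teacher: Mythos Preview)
Your argument is correct and follows essentially the same route as the paper: concatenate $2^{-k}$-nets of $C$ of cardinality $O(2^{k(d-1)})$, and for each $n$ pick $p_n$ in the block immediately preceding the one containing $n$. The only differences are cosmetic (an index shift of one block) and that you supply details the paper omits---namely the packing/volume justification of the net-size bound and the separate treatment of the finitely many indices $n\le M_1$.
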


\begin{proof}[Proof of Lemma \ref{lem:net}] There exists $L>0$ such that for each $k$, we can find a $2^{-k}$ net $H_k$ of $C$ with $|H_k| \leqslant L(2^k)^{d-1}$ where $|H_k|$ denotes the number of elements of $H_k$. The sequence $(u_n)$ is obtained by enumerating first the elements of $H_1$, then all the elements of $H_2$ and so on. Let us fix $x \in C$ and check the property of the lemma. For $n \in \mathbb{N}$, there exists an integer $k$ such that $|H_1|+\ldots+|H_k| \leqslant n < |H_1|+\ldots+|H_k|+|H_{k+1}|$, in particular, we get
$$n \leqslant L \sum_{i=1}^{k+1} (2^{d-1})^{i}=2^{d-1}L \dfrac{(2^{d-1})^{k+1}-1}{2^{d-1}-1} \leqslant C_{d} 2^{k(d-1)}$$
where $C_d>0$ only depends on $d$. By the definition of $H_k$, there exists an element $u_{p_n} \in H_k$ such that $\|x-u_{p_n}\| \leqslant 2^{-k} \leqslant (\frac{C_d}{n})^{1/d-1}$. Since $k-1 \leqslant |H_1|+\ldots+|H_{k-1}| < p_n \leqslant |H_1|+\ldots+|H_k|$, $p_n \leqslant n$, $\lim p_n=\infty$ and the lemma is shown. 
\end{proof}

\begin{proof}[Proof of Proposition \ref{prop:sep}] For each $n$, we can select a linear form $f_n$ such that $f_n(u_n)=0$,   ${\rm Ker} f_n \subset H$ and $\|f_n\|=n^{1/2(d-1)}$. If $x \notin H$, then 
$$ |f_n(x)|=\|f_n\| d(x,{\rm Ker} f_n) \geqslant \|f_n\| d(x,H) \geqslant n^{1/2(d-1)} d(x,H)$$
thus we have 1). For 2), we may assume that $x \neq 0$ and apply the lemma with $y=\frac{x}{\|x\|} \in C$, we obtain a sequence $(p_n)$ with $p_n \leqslant n$,  $\lim p_n=\infty$ and $\|y-u_{p_n}\| \leqslant \frac{K}{n^{1/d-1}}$. For each $n$ we have 
$$ |f_{p_n}(x)|=\|x\| |f_{p_n}(y-u_{p_n})| \leqslant \|f_{p_n}\| \|y-u_{p_n}\| \|x\| \leqslant \dfrac{K}{n^{1/2(d-1)}} \|x\|$$ 
which proves 2) and achieves the proof. 
\end{proof}

For the complex case, the proof is almost the same. The only difference is that for $\epsilon>0$ the entropy of the unit sphere is of the order of $\frac{1}{\epsilon^{2d-1}}$ which gives a different estimate of $\|x-u_{p_n}\|$ in the lemma, so we have also to change the exponent in $\|f_n\|$ in the proof of the proposition.

\section{Construction of Operators}

\subsection{Complex case}

In this sub-section, we prove the complex version of Theorem \ref{thm:main}.

\begin{theorem}\label{thm:complex} Let $X$ be a separable complex Banach space.
Then, there exists $R \in \mathcal{A}_X$ such that $R-I$ is nuclear.
\end{theorem}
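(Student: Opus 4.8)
The plan is to build $R$ on the model space $X$ as a small perturbation of the identity whose iterates act, on a suitable sequence of finite-dimensional blocks, like multiplication by the linear forms $f_n$ produced by Proposition \ref{prop:sep}. Since $X$ is separable, I would first fix a biorthogonal-type system adapted to a dense countable set: choose a sequence of finite-dimensional subspaces $E_1 \subset E_2 \subset \cdots$ with $\overline{\bigcup_k E_k}=X$, together with norm-one vectors $(e_j)$ spanning them and functionals $(e_j^*)$ with controlled norms (using that on a separable space one can find such a system, e.g. a Markushevich basis, or just a dense sequence together with Hahn--Banach functionals). The operator will have the form $R=I+K$ where $K=\sum_j \lambda_j \, e_{\sigma(j)}^* \otimes u_j$ is a sum of rank-one operators with $\sum_j |\lambda_j|\,\|e_{\sigma(j)}^*\|\,\|u_j\|<\infty$, which guarantees $K$ is nuclear; the indices $\sigma(j)$ and vectors $u_j$ are chosen so that $R$ is, up to a harmless correction, lower-triangular in the chosen system, hence its powers are computable.

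The core of the construction is to apply Proposition \ref{prop:sep} in each coordinate block. On the $d$-dimensional block $\mathbb{K}^{d}$ I would take $H$ to be a single hyperplane (or a finite union of hyperplanes chosen so that the $H$'s corresponding to distinct blocks, pulled back to $X$, have dense union of complements but the intersection of all of them — the eventual $B_R$ piece — still has nonempty interior), and get linear forms $f_n$ with $|f_n(x)|\to\infty$ off $H$ and $\underline{\lim}|f_n(x)|=0$ on $H$. I would arrange the dimensions $d=d(k)\to\infty$ and interleave the blocks in time so that the sequence of iterates $R^t$ restricted to block $k$ runs through (a rescaled version of) the $f_n$'s for that block: concretely, on a long stretch of times $t$ the operator $R^t$ should send a vector $x$ in block $k$ to something of norm comparable to $|f_{n(t)}(x)|$ while acting nearly trivially on the already-visited blocks. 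Then a vector $x$ whose block-$k$ component avoids $H$ for infinitely many $k$ lies in $A_R$, while a vector lying in the "good" region of every block (an open set, since each $H$ is closed with nonempty-interior complement in the relevant sense, and a neighbourhood of a point in $\bigcap_k (\text{complement of }H)$ can be carved out) lies in $A_R$ too, giving $A_R$ nonempty interior; and a vector in a neighbourhood of the "bad" configuration — one which sits inside every $H_k$ — has $\underline{\lim}\|R^tx-x\|=0$, giving $B_R$ nonempty interior. The partition $\{A_R,B_R\}=X$ comes from checking that every $x$ either escapes to infinity or has a subsequence of iterates returning near $x$, using the dichotomy in Proposition \ref{prop:sep} block by block together with summability controlling the tails.

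The main obstacle I anticipate is the bookkeeping that makes the abstract block picture into an honest bounded operator on an arbitrary separable Banach space: one must (i) keep $K=R-I$ nuclear, which forces the weights $\lambda_j$ to be summable against the system norms and hence forces the "amplification" $|f_n|\to\infty$ to be realized not in one step but accumulated over many iterates, so the time-scheduling of blocks must be compatible with the decay of $\lambda_j$; (ii) control the off-diagonal interaction between blocks so that acting on block $k$ does not destroy what was arranged on blocks $<k$ — this is where a genuinely (quasi-)triangular structure and a rapidly-increasing separation of the time windows is needed; and (iii) ensure that after rescaling by the necessarily small coefficients the forms $f_n$ still blow up, i.e. that the product of the per-step gains over a time window beats the per-step smallness, which pins down the exponents $n^{1/2(d-1)}$ in Proposition \ref{prop:sep} against the chosen $d(k)$. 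Once these three are balanced — essentially a single inequality relating $d(k)$, the length of the $k$-th time window, and $\lambda_j$ — the verification of parts A) and B) is routine from the two properties of $(f_n)$. The real case would then follow by the same scheme using the real Proposition \ref{prop:sep}, as remarked after its proof.
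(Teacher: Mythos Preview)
Your plan has the right ingredients---a bounded biorthogonal system, a nuclear perturbation of the identity, and Proposition~\ref{prop:sep}---but the block-by-block scheme has a genuine gap and is much more elaborate than necessary.

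The gap is in the interior of $B_R$. You describe $B_R$ as the set of $x$ whose block-$k$ component lies in $H_k$ for every $k$. If $H_k$ is a hyperplane (or finite union of hyperplanes) in the $k$-th block and the blocks are independent, then each pulled-back condition $\{x:\text{block}_k(x)\in H_k\}$ is a closed hyperplane of $X$, and the intersection over all $k$ has infinite codimension, hence empty interior. Your parenthetical acknowledges this but does not resolve it: the only way $\bigcap_k H_k$ can be fat in $X$ is if all the defining functionals live in a \emph{fixed} finite-dimensional subspace of $X^*$, at which point the many blocks collapse to a single finite-dimensional test and the scheme with $d(k)\to\infty$ evaporates. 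There is also a smaller issue: you speak of ``the product of the per-step gains over a time window'', but for an operator of the form $I+K$ the growth along an orbit is additive (a partial sum), not multiplicative, and this affects how the exponents must be balanced.

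The paper exploits exactly the collapse just described. Proposition~\ref{prop:sep} is applied \emph{once}, with $d=2$, on the fixed plane $\mathrm{span}(e_1,e_2)$: one takes a closed union $F$ of complex lines through $0$ in $\mathbb{C}^2$ with nonempty interior and nonempty complement, obtains the forms $(f_k)$ on that plane, and sets
\[
Rx \;=\; Sx \;+\; \sum_{k\ge 3}\frac{1}{m_{k-1}}\,f_k(Px)\,e_k,
\]
where $P$ is the projection onto $\mathrm{span}(e_1,e_2)$ and $S$ is diagonal, $Se_k=\lambda_k e_k$ with $\lambda_k=e^{i\pi/m_k}$ and $m_k\mid m_{k+1}$ growing fast. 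The iterates are then $R^t x=S^t x+\sum_k \frac{\lambda_{k,t}}{m_{k-1}}f_k(Px)e_k$ with $\lambda_{k,t}=\sum_{l=0}^{t-1}\lambda_k^l$; this partial geometric sum is $\gtrsim m_{k-1}$ on the whole range $m_{k-1}\le t\le m_k$ and vanishes at $t=2m_n$ for $n\ge k$. The single two-dimensional criterion $Px\notin F$ versus $Px\in F$ already gives the partition $\{A_R,B_R\}$, and both sets inherit nonempty interior from $F$ and its complement in the plane. There is no block interleaving, no growing $d(k)$, and nuclearity of $R-I$ follows from $\sum_k|\lambda_k-1|<\infty$ together with $\sum_k\|f_k\|/m_{k-1}<\infty$.
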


Recall that an operator $N$ is nuclear if there exist some continuous linear functionals $x^*_k$ and some vectors $x_k$ such that $N$ can be written as $N=\sum_{k=1}^{\infty} x_k \otimes x^*_k$ with $\sum_{k=1}^{\infty} \|x^*_k\|\|x_k\|<\infty$ where $x_k \otimes x^*_k$ is the rank one operator defined by
$$ x_k \otimes x^*_k(x)=\langle x^*_k,x \rangle x_k.$$
 These operators are obviously compact, being limits of finite rank operators. \\

To construct our operator, we will use the following theorem which says that a separable Banach space supports a sequence $(e_n)_{n \geqslant 1}$ which behaves more or less like a Schauder basis \cite{OP}.

\begin{theorem} \label{thm:op} If $X$ is a separable Banach space and $\epsilon>0$, one can find sequences $(e_n,e_n^*) \subset X \times X^*$ such that: \\
i) ${\rm span}(e_n,n \geqslant 1)$ is dense in $X$. \\
ii) $\langle e_n^*,e_m \rangle=\delta_{n,m}$ where $\delta_{n,m}=0$ if $n \neq m$ and $1$ if $n=m$.  \\
iii) $\sup \|e_n\| \|e_n^*\| \leqslant 1+\epsilon$.
\end{theorem}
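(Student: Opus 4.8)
This is the theorem of Ovsepian and Pe\l czy\'nski \cite{OP} (the optimal constant $1+\epsilon$ being due to Pe\l czy\'nski), so in principle nothing needs to be said; but if one wanted a self-contained argument, the plan would run in two stages.

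The first stage is to forget property iii) and produce a \emph{Markushevich basis}: a biorthogonal system $(x_n,x_n^*)\subset X\times X^*$ with $\mathrm{span}(x_n)$ dense in $X$ and $(x_n^*)$ separating the points of $X$. I would fix a dense sequence $(y_n)\subset X$ and a sequence $(\varphi_n)\subset X^*$ separating points (both available by separability) and build $(x_n,x_n^*)$ by a back-and-forth induction: given a finite biorthogonal subsystem spanning $E=\mathrm{span}(x_1,\dots,x_m)$, on odd steps, if $y_k\notin E$, adjoin the vector $z=y_k-\sum_{i\le m}\langle x_i^*,y_k\rangle x_i$ (which lies outside $E$ and is killed by all current functionals) together with any $x_{m+1}^*\in X^*$ with $\langle x_{m+1}^*,z\rangle=1$ vanishing on $E$; on even steps carry out the dual move so as to force $(x_n^*)$ to be total. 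After relabelling this gives an M-basis, but with no control on $\|x_n\|\,\|x_n^*\|$.

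The second stage is to make the system bounded by a blocking argument. Partition $\mathbb{N}$ into consecutive finite blocks $B_1<B_2<\cdots$, and inside a block $B$ look for new vectors $e_i=\sum_{j\in B}a_{ij}x_j$ and new functionals $e_i^*=\sum_{j\in B}b_{ij}x_j^*$ ($i\in B$), with $A=(a_{ij})$ invertible and $(b_{ij})=(A^{-1})^{T}$. Then biorthogonality inside a block holds automatically, biorthogonality between distinct blocks is \emph{free} because $\langle x_j^*,x_l\rangle=0$ whenever $j,l$ lie in different blocks, and $\mathrm{span}(e_i:i\in B)=\mathrm{span}(x_j:j\in B)$; so i) and ii) survive for every choice of the matrices $A$. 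What is left is a finite-dimensional problem: to choose $A$ so that $\|e_i\|\,\|e_i^*\|\le 1+\epsilon$ for each $i$. Writing $N_1(c)=\|\sum_{j\in B}c_jx_j\|_X$ and $N_2(c)=\|\sum_{j\in B}c_jx_j^*\|_{X^*}$, one wants the $i$-th row of $A$ to be $N_1$-small and the $i$-th column of $A^{-1}$ to be $N_2$-small; if $N_2$ were the dual norm $N_1^*$ this would be settled with constant $1$ by an Auerbach basis of $(\mathbb{K}^B,N_1)$.

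The hard part will be exactly this last point: in general $N_2$ can vastly exceed $N_1^*$, since the norm of $x_j^*$ on all of $X$ may dwarf its norm on $\mathrm{span}(x_l:l\in B)$, and then no $A$ supported on that single block can beat that ratio. The remedy — the technical heart of \cite{OP} — is not to cut a fixed M-basis into blocks but to build the basis and the blocks \emph{simultaneously}: each time the system is enlarged by a finite-dimensional increment one selects an Auerbach-type basis of the increment and, when extending the new coefficient functionals to $X$ by Hahn--Banach, controls their norms by taking the increment large enough that the relevant quotient norm is within $1+\epsilon$ of the full norm, all while keeping $\mathrm{span}(e_n)$ dense. Once this bookkeeping is carried out, the estimates of the previous paragraph sharpen to $\sup_n\|e_n\|\,\|e_n^*\|\le 1+\epsilon$.
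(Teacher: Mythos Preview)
Your proposal is appropriate: the paper does not prove this theorem at all but simply quotes it from \cite{OP}, and you correctly identify it as the Ovsepian--Pe\l czy\'nski result and point to the same reference. The sketch you add is a reasonable outline of the original argument, but for the purposes of this paper nothing beyond the citation is required.
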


For our case, we won't need the precise estimate with $\epsilon$. However, observe that by replacing $e_n$ by $e_n/\|e_n\|$ and $e_n^*$ by $\|e_n\|e_n^*$ we can suppose that $\|e_n\|=1$, $K=\sup\|e_n^*\|<\infty$ and properties i) and ii) of Theorem \ref{thm:op} are still satisfied. We then consider a closed subset $F \subset {\rm span}(e_1,e_2)$ which is a union of complex lines passing through 0 and the sequence $(f_k)$ of linear forms associated on ${\rm span}(e_1,e_2)$  in the asymptotic separating lemma (we just use the particular case of the lemma $d=2$). Let us denote by $P$ the projection defined on $X$ onto ${\rm span}(e_1,e_2)$ by 
$$ Px=\langle e_1^*,x \rangle e_1 +\langle e_2^*,x \rangle e_2. $$
$P$ is continuous with $\|P\| \leqslant 2K$. Then, we put $\lambda_1=\lambda_2=1$ and for $k \geqslant 3$, $\lambda_k=e^{\frac{i \pi}{m_k}}$ where $(m_k)$ is a sequence of positive integers such that $m_{k} | m_{k+1}$ and which goes very fast to infinity, namely 
$$ \sum_{k=3}^{\infty} \dfrac{m_{k-2}}{m_{k-1}} \|f_k\|<\infty \; \; {\rm and} \; \; m_k \geqslant 15m_{k-1}. $$
We denote by $c_{00}$ the subspace generated by the sequence $(e_n)_{n \geqslant 1}$. For $x \in c_{00}$, $x=\sum_{k=1}^{n} x_k e_k$, we set
$$ Sx=\sum_{k=1}^{n} \lambda_k x_k e_k.$$
This defines a bounded operator $S$ on $c_{00}$, indeed for $x=\sum_{k=1}^n x_ke_k \in c_{00}$,
$$ \|Sx\| \leqslant \sum_{k=1}^n |\lambda_k-1| \|x_k e_k\|+\|x\| \leqslant \left(K \sum_{k=1}^{\infty} |\lambda_k-1|+1\right) \|x\| $$
and by our assumption on $(m_k)$, it is clear that $\sum_{k=1}^{\infty} |\lambda_k-1|<\infty$. Hence, $S$ extends to a bounded operator on $X$ (because $c_{00}$ is dense in $X$). We still denote this operator by $S$.

\begin{remark} Note that if $(e_k)$ is a normalized Schauder basis, then if $x=\sum_{k=1}^{\infty} x_k e_k$, $S$ is explicitly given by
$$ Sx=\sum_{k=1}^{\infty} \lambda_k x_k e_k.$$
\end{remark}
 
For $x \in X$, we now define an operator $R$ by 
$$ Rx=Sx+\sum_{k=3}^{\infty} \dfrac{1}{m_{k-1}} f_k(Px) e_k. $$
Note that the second sum is a bounded operator because $\sum_{k=3}^{\infty} \frac{1}{m_{k-1}} \|f_k\| < \infty$. So $R$ is itself bounded, if we set 
$$\tilde{A}_R=\{x \in X, Px \notin F\} \; \; {\rm and} \; \; \tilde{B}_R=\{x \in X, Px \in F\},$$
then we will show that $A_R=\tilde{A}_R$ and $B_R=\tilde{B}_R$ so $R$ will be the required operator (provided the relative interior of $F$ in ${\rm span}(e_1,e_2)$ is non-empty and $F \neq {\rm span}(e_1,e_2)$).
Since $\lambda_k^{2m_k}=1$ and more generally $\lambda_k^{2m_n}=1$ for $n \geqslant k$ (by the hypothesis of divisibility of the $m_k$), the operator $S$ has the following property.

\begin{lemma} For $x \in X$, $ \lim_{n \rightarrow \infty} S^{2m_n}x=x.$
\end{lemma}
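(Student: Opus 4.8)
The plan is to prove the statement first on the dense subspace $c_{00}$, where it is essentially trivial, and then to propagate it to all of $X$ by a density argument; for the latter the one thing that needs checking is a bound on $\|S^{2m_n}\|$ that is uniform in $n$.

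First I would record the action of $S^{2m_n}$ on the vectors $e_k$. Since $Se_k=\lambda_k e_k$, we have $S^{2m_n}e_k=\lambda_k^{2m_n}e_k$, and for $k\le n$ the divisibility $m_k\mid m_n$ forces $\lambda_k^{2m_n}=e^{2\pi i(m_n/m_k)}=1$. Hence if $x=\sum_{k=1}^N x_k e_k\in c_{00}$, then $S^{2m_n}x=x$ for every $n\ge N$, so certainly $S^{2m_n}x\to x$ on $c_{00}$.

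Next I would bound $\sup_n\|S^{2m_n}\|$. Writing $S^{2m_n}-I$ as the diagonal operator with coefficients $\lambda_k^{2m_n}-1$ and using $\|e_k\|=1$ together with $\|e_k^*\|\le K$, one gets for $z=\sum_k x_k e_k\in c_{00}$
$$\|S^{2m_n}z-z\|\le\sum_{k}|\lambda_k^{2m_n}-1|\,|x_k|\,\|e_k\|\le K\Bigl(\sum_{k}|\lambda_k^{2m_n}-1|\Bigr)\|z\|.$$
Only indices $k>n$ contribute, and for those $|\lambda_k^{2m_n}-1|=|e^{2\pi i m_n/m_k}-1|\le 2\pi m_n/m_k$; since $m_k\ge 15^{\,k-n}m_n$ by the hypothesis $m_k\ge 15m_{k-1}$, the tail $\sum_{k>n}2\pi m_n/m_k$ is dominated by $2\pi\sum_{j\ge1}15^{-j}<1$, a bound independent of $n$. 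Thus $\|S^{2m_n}\|\le K+1$ for all $n$.

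Finally I would combine the two. Given $x\in X$ and $\varepsilon>0$, pick $y\in c_{00}$ with $\|x-y\|<\varepsilon$ and $N$ with $S^{2m_n}y=y$ for $n\ge N$; then for such $n$,
$$\|S^{2m_n}x-x\|\le\|S^{2m_n}(x-y)\|+\|S^{2m_n}y-y\|+\|y-x\|\le(K+2)\varepsilon,$$
so $\limsup_n\|S^{2m_n}x-x\|\le(K+2)\varepsilon$, and letting $\varepsilon\to0$ gives the claim. I expect the only non-routine point to be the uniform boundedness estimate, which is exactly where the geometric growth $m_k\ge 15m_{k-1}$ enters; the remainder is a standard density ($3\varepsilon$) argument.
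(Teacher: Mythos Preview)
Your proposal is correct and follows essentially the same route as the paper: establish the result on $c_{00}$ (where $S^{2m_n}$ is eventually the identity), bound $\sup_n\|S^{2m_n}\|$ by estimating the tail $\sum_{k>n}|\lambda_k^{2m_n}-1|$, and conclude by a $3\varepsilon$ density argument. The only cosmetic difference is that you control the tail via the geometric decay $m_n/m_k\le 15^{-(k-n)}$, whereas the paper shifts indices and bounds it by the convergent series $\sum_k m_{k-1}/m_k$; both yield a uniform constant and the argument is otherwise identical.
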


\begin{proof} We first show that the sequence $(\|S^{2m_l}\|)$ is bounded. If we fix $x=\sum_{k=1}^{n} x_k e_k$ such that $\|x\|=1$, we have
\begin{eqnarray*}  \left \|S^{2m_l}x-x \right\| & = & \left \| \sum_{k=1}^{n} (\lambda_k^{2m_l}-1) x_k e_k \right \| \\
& = & \left\| \sum_{k=l+1}^{n} (\lambda_k^{2m_l}-1) x_k e_k \right\| \; \; {\rm because} \; \; \lambda_k^{2m_l}=1 \; \; {\rm for} \; \; l \geqslant k \\
& \leqslant & K \sum_{k=l+1}^{n} \left|\lambda_k^{2m_l}-1 \right| \\
& \leqslant & 2K \sum_{k=l+1}^{\infty} \dfrac{2\pi m_l}{m_k} \;\; {\rm because} \;\; |e^{z}-1| \leqslant 2|z| \;\; {\rm for} \;\; |z| \leqslant 1/2 \\
& \leqslant &  K \sum_{k=1}^{\infty}  \dfrac{4 \pi m_{k-1}}{m_k}.
\end{eqnarray*} 

So $(\|S^{2m_l}x\|)$ is bounded by $1+K \sum_{k=1}^{\infty}  \frac{4 \pi m_{k-1}}{m_k} < \infty$ whenever $x \in c_{00}$, $\|x\|=1$. By density of $c_{00}$ in $X$, this implies that the sequence $(\|S^{2m_l}\|)$ is bounded. To see the claim of the lemma, fix $x \in X$, $\epsilon>0$ and $y \in c_{00}$ such that $\|y-x\| \leqslant \epsilon$, we have
\begin{eqnarray*} \left \|S^{2m_l}x-x \right\| & = & \left \|S^{2m_l}(x-y)+(S^{2m_l}y-y)+(y-x) \right \| \\
& \leqslant & \sup \|S^{2m_l}\|\epsilon+\|S^{2m_l}y-y\|+\epsilon.
\end{eqnarray*}
Since for large enough $l$, $S^{2m_l}y=y$, we have the conclusion.
\end{proof}

We now explicit the iterates of $R$. 

\begin{lemma} For $x \in X$ and $t \geqslant 1$, we have
$$ R^t{x}=S^{t}x+\sum_{k=3}^{\infty} \dfrac{\lambda_{k,t}}{m_{k-1}}f_k(Px) e_k \; \; {\rm where} \; \; \lambda_{k,t}=\sum_{l=0}^{t-1} \lambda_k^l. $$
\end{lemma}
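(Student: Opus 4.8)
The plan is to prove the formula for $R^t x$ by induction on $t \geqslant 1$. The base case $t=1$ is immediate from the definition of $R$, since $\lambda_{k,1} = \sum_{l=0}^{0}\lambda_k^l = 1$. For the inductive step, I would assume the formula holds for some $t \geqslant 1$ and compute $R^{t+1}x = R(R^t x)$ by applying $R$ to the expression $S^t x + \sum_{k=3}^{\infty} \frac{\lambda_{k,t}}{m_{k-1}} f_k(Px)\, e_k$, using linearity and continuity of $R$ to pass $R$ inside the sum.

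The key point in carrying out the inductive step is to understand how $R$ acts on the two pieces. Applying $R$ to $S^t x$ gives $S^{t+1}x + \sum_{k=3}^{\infty}\frac{1}{m_{k-1}} f_k(P S^t x)\, e_k$. Here one needs the observation that $P$ and $S$ commute on $\mathrm{span}(e_1,e_2)$ where $\lambda_1 = \lambda_2 = 1$: more precisely, since $S e_1 = e_1$ and $S e_2 = e_2$, we have $PS^t x = P x$ for all $t$ (because $P S^t x = \langle e_1^*, S^t x\rangle e_1 + \langle e_2^*, S^t x\rangle e_2$ and $\langle e_j^*, S^t x\rangle = \lambda_j^t \langle e_j^*, x\rangle = \langle e_j^*, x\rangle$ for $j=1,2$). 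So this term contributes $\sum_{k=3}^{\infty}\frac{1}{m_{k-1}} f_k(Px)\, e_k$. Applying $R$ to the second piece: since each $e_k$ for $k \geqslant 3$ satisfies $P e_k = 0$, we get $R e_k = S e_k = \lambda_k e_k$, hence $R\bigl(\sum_{k=3}^{\infty}\frac{\lambda_{k,t}}{m_{k-1}} f_k(Px) e_k\bigr) = \sum_{k=3}^{\infty}\frac{\lambda_k \lambda_{k,t}}{m_{k-1}} f_k(Px)\, e_k$. Adding the two contributions, the coefficient of $\frac{1}{m_{k-1}} f_k(Px)\, e_k$ becomes $1 + \lambda_k \lambda_{k,t} = 1 + \lambda_k \sum_{l=0}^{t-1}\lambda_k^l = \sum_{l=0}^{t}\lambda_k^l = \lambda_{k,t+1}$, which is exactly what is claimed.

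I expect the main (minor) obstacle to be the justification of interchanging $R$ with the infinite sum and verifying that all series converge in norm — but this is already essentially granted, since the paper has established that $x \mapsto \sum_{k=3}^{\infty}\frac{1}{m_{k-1}} f_k(Px) e_k$ is a bounded operator (using $\sum \frac{1}{m_{k-1}}\|f_k\| < \infty$ and $\|e_k\| = 1$), and $S$ is bounded; the partial sums converge, so continuity of $R$ lets it pass inside. The genuinely substantive identities — $PS^t = P$ and $R e_k = \lambda_k e_k$ for $k \geqslant 3$ — are elementary consequences of the definitions of $P$, $S$, and the biorthogonality relation $\langle e_n^*, e_m\rangle = \delta_{n,m}$. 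Thus the proof is a routine induction once these two observations are recorded.
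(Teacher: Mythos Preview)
Your proposal is correct and follows essentially the same induction argument as the paper: both use $PS^{t}x = Px$ (from $\lambda_1=\lambda_2=1$) and the fact that the perturbation vanishes on vectors supported on $\{e_k : k\geqslant 3\}$ (you phrase this as $Re_k=\lambda_k e_k$, the paper as $Py=0$), then conclude via $1+\lambda_k\lambda_{k,t}=\lambda_{k,t+1}$. The only difference is cosmetic packaging, and your discussion of passing $R$ through the series is, if anything, slightly more explicit than the paper's.
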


\begin{proof} The statement is true for $t=1$, we proceed by induction and suppose it is true for $t$, we have
\begin{eqnarray*} R^{t+1}x=R(R^{t}x) & = & R(S^{t}x)+R\left(\sum_{k=3}^{\infty} \dfrac{\lambda_{k,t}}{m_{k-1}}f_k(Px) e_k\right) \\
& = & S^{t+1}x+\sum_{k=3}^{\infty} \dfrac{1}{m_{k-1}}f_k(PS^{t}x)e_k+\sum_{k=3}^{\infty} \dfrac{\lambda_k \lambda_{k,t}}{m_{k-1}}f_k(Px) e_k \\
& + & \sum_{k=3}^{\infty}\dfrac{1}{m_{k-1}}f_k(Py) e_k
\end{eqnarray*}
where $y=\sum_{l=3}^{\infty} \frac{\lambda_{l,t}}{m_{l-1}}f_l(Px)e_l$, so $Py=0$. Since $\lambda_1=\lambda_2=1$, we have $PS^{t}x=Px$, and since $\lambda_k \lambda_{k,t}+1=\lambda_{k,t+1}$, we deduce the formula for $t+1$. 
\end{proof}

We now study the behaviour of $\lambda_{k,t}$. We have the following easy fact.

\begin{fact} i) For all $k,t \geqslant 1$, $|\lambda_{k,t}| \leqslant t$. \\
ii) $\lambda_{k,2m_n}=0$ for $n \geqslant k$. \\
iii) For $m_{k-1} \leqslant t \leqslant m_{k}$, $|\lambda_{k,t}| \geqslant \frac{2}{\pi} m_{k-1}$.
\end{fact}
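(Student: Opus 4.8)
The plan is to treat the three items separately; each reduces to an elementary estimate on the finite geometric sum $\lambda_{k,t}=\sum_{l=0}^{t-1}\lambda_k^l$. For $k=1,2$ one has $\lambda_k=1$, hence $\lambda_{k,t}=t$, and the assertions are either trivial or not needed (items ii) and iii) are only ever invoked for $k\geqslant 3$, since the exceptional sums defining $R^t$ start at $k=3$). So from now on I assume $k\geqslant 3$, where $\lambda_k=e^{i\pi/m_k}\neq 1$ and the closed form $\lambda_{k,t}=\dfrac{\lambda_k^t-1}{\lambda_k-1}$ is available. Item i) is then immediate: since $|\lambda_k|=1$, the triangle inequality gives $|\lambda_{k,t}|\leqslant\sum_{l=0}^{t-1}|\lambda_k|^l=t$.

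For item ii), fix $n\geqslant k$. The divisibility hypothesis $m_k\mid m_n$ yields $\lambda_k^{2m_n}=e^{2\pi i\,m_n/m_k}=1$; substituting $t=2m_n$ into the closed form makes the numerator vanish, hence $\lambda_{k,2m_n}=0$.

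Item iii) is the only one requiring a genuine, though still short, computation. Using $|e^{i\theta}-1|=2\,|\sin(\theta/2)|$, I write
$$|\lambda_{k,t}|=\frac{|\lambda_k^t-1|}{|\lambda_k-1|}=\frac{\sin\!\big(\pi t/(2m_k)\big)}{\sin\!\big(\pi/(2m_k)\big)},$$
which is legitimate because for $m_{k-1}\leqslant t\leqslant m_k$ the argument $\pi t/(2m_k)$ lies in $(0,\pi/2]$, where $\sin$ is positive, increasing and concave. By monotonicity and Jordan's inequality $\sin u\geqslant\frac{2}{\pi}u$ on $[0,\pi/2]$ we get $\sin\!\big(\pi t/(2m_k)\big)\geqslant\sin\!\big(\pi m_{k-1}/(2m_k)\big)\geqslant\frac{2}{\pi}\cdot\frac{\pi m_{k-1}}{2m_k}=\frac{m_{k-1}}{m_k}$, while $\sin\!\big(\pi/(2m_k)\big)\leqslant\frac{\pi}{2m_k}$. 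Dividing these two bounds gives $|\lambda_{k,t}|\geqslant\frac{2}{\pi}m_{k-1}$.

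I do not expect any real obstacle: all three parts are one- or two-line estimates. The single point to watch is the upper restriction $t\leqslant m_k$ in iii): it is exactly what confines $\pi t/(2m_k)$ to $[0,\pi/2]$, the range on which $\sin$ is monotone and Jordan's inequality applies; without it $\sin\!\big(\pi t/(2m_k)\big)$ could be small, or even vanish, and the lower bound would collapse. (The hypothesis $m_k\geqslant 15 m_{k-1}$ is not needed for this Fact; it will be used later to combine the three estimates.)
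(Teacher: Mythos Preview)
Your proof is correct and follows essentially the same route as the paper: triangle inequality for i), the closed geometric-sum formula for ii), and the sine quotient together with Jordan's inequality $\sin u\geqslant\tfrac{2}{\pi}u$ on $[0,\pi/2]$ and $\sin u\leqslant u$ for iii). The extra monotonicity step you insert in iii) (bounding $\sin(\pi t/(2m_k))$ below by $\sin(\pi m_{k-1}/(2m_k))$ before applying Jordan) is harmless but unnecessary---the paper applies Jordan directly to get $\sin(\pi t/(2m_k))\geqslant t/m_k\geqslant m_{k-1}/m_k$---and your explicit remark that ii) and iii) are only needed (and only literally true) for $k\geqslant 3$ is a welcome clarification that the paper leaves implicit.
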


\begin{proof} i) is just the triangle inequality. ii) is obvious since $\lambda_{k,t}$ can be computed easily as a geometric sum:
$$\lambda_{k,t}=\dfrac{e^\frac{i\pi t}{m_k}-1}{e^\frac{i\pi}{m_k}-1}.$$
For iii), we get 
$$ | \lambda_{k,t}|=\dfrac{\left|\sin\left(\frac{\pi t}{2 m_k}\right)\right|}{\left|\sin\left(\frac{\pi}{2m_k}\right)\right|}.$$
The desired lower bound follows from the inequalities : $\sin(y) \geqslant \frac{2}{\pi}y$ for $y \in [0,\frac{\pi}{2}]$ and $|\sin(y)| \leqslant |y|$ for $y \in \mathbb{R}$.
\end{proof}

We can now pass to the proof of the main theorem. Let $x \in c_{00} \cap \tilde{A}_R, x=\sum_{k=1}^{n} x_ke_k$. Fix $k \geqslant 3$, we set $\delta_{k \leqslant n}=1$ if $k \leqslant n$ and $\delta_{k \leqslant n}=0$ if $k>n$. We have
\begin{eqnarray*}  K \|R^tx\|  \geqslant  |\langle e_k^*,R^tx \rangle| &=& \left|\lambda_k^t\langle e_k^*,x \rangle \delta_{k \leqslant n} +\dfrac{\lambda_{k,t}}{m_{k-1}}f_k(Px)\right| \\
& \geqslant & \left|\dfrac{\lambda_{k,t}}{m_{k-1}}f_k(Px)\right|-| \lambda_k^t \langle e_k^*,x \rangle \delta_{k \leqslant n} | \\
& \geqslant & \left|\dfrac{\lambda_{k,t}}{m_{k-1}}f_k(Px)\right|-K \|x\|.
\end{eqnarray*}

We claim that $\tilde{A}_R \cap c_{00}$ is dense in $\tilde{A}_R$. Since the last inequality involves only continuous functions, it stays true for $x \in \tilde{A}_R$ and for $k \geqslant 3$, $t \geqslant 1$. Now, let us fix $k\geqslant 1$ and $m_{k-1} \leqslant t \leqslant m_k$, using the last inequality and the previous fact, we get
$$ \|R^tx\| \geqslant \left| \dfrac{\lambda_{k,t}}{Km_{k-1}}f_k(Px) \right|-\|x\| \geqslant \dfrac{2}{\pi K} |f_{k}(Px)|-\|x\|.$$
This proves that $\|R^tx\| \rightarrow \infty$ whenever $x \in \tilde{A}_R$ because by the separating lemma $|f_k(Px)| \rightarrow \infty$. For the proof of the claim, if $x \in \tilde{A}_R$, there exists $(z_k) \subset c_{00}$ such that $x=\lim z_k$. Write $z_k=z_{1,k}e_1+z_{2,k}e_2+y_k$ with $y_k \in {\rm span}(e_l, l \geqslant 3)$. If $Px=x_1e_1+x_2e_2$, the sequence $\tilde{z_k}=x_1e_1+x_2e_2+y_k$ has the required properties. \\
 
Now, let $x \in \tilde{B}_R$, then there exists a subsequence $(k_n)$ such that $|f_{k_n}(Px)|$ goes to 0. Write

\begin{eqnarray*} \left\|\sum_{k=3}^{\infty} \dfrac{\lambda_{k,2m_{k_n-1}}}{m_{k-1}}f_k(Px) e_k \right\|   \leqslant &  & \sum_{k=3}^{k_n-1} \dfrac{\left| \lambda_{k,2m_{k_n-1}} \right|}{m_{k-1}}|f_k(Px)|
+\dfrac{\left|\lambda_{k_n,2m_{k_n-1}}\right|}{m_{k_n-1}}|f_{k_n}(Px)| \\
& + & \sum_{k=k_n+1}^{\infty} \dfrac{|\lambda_{k,2m_{k_n-1}}|}{m_{k-1}}  |f_k(Px)|. 
\end{eqnarray*}

The first sum is equal to 0 according to the previous fact. Using the fact again, we see that the second term goes to $0$ because $\frac{|\lambda_{k_n,2m_{k_n-1}}|}{m_{k_n-1}} \leqslant 2$. For the last sum, $k \geqslant k_{n}+1$ so $k_{n}-1 \leqslant k-2$, and $m_{k_n-1} \leqslant m_{k-2}$, hence
\begin{eqnarray*}  \sum_{k=k_n+1}^{\infty} \dfrac{\left|\lambda_{k,2m_{k_n-1}}\right|}{m_{k-1}}|f_k(Px)| & \leqslant & \left(\sum_{k=k_n+1}^{\infty} \dfrac{2m_{k_n-1}}{m_{k-1}}\|f_k\|\right) \|P\| \|x\| \\
& \leqslant & \left( \sum_{k=k_n+1}^{\infty} \dfrac{2m_{k-2}}{m_{k-1}}\|f_k\|\right) \|P\| \|x\| .
\end{eqnarray*}

Since $\sum_{k \geqslant 1}\frac{m_{k-2}}{m_{k-1}} \|f_k\|<\infty$, the last term goes to 0. On the other hand, $S^{2m_{k_n-1}}x$ goes to $x$, this shows that $\underline{\lim} \|R^tx-x\|=0$ whenever $x \in \tilde{B}_R$. \\ 

Finally, let us check the last claim of our theorem, by a density argument we see that for $x \in X$
$$(S-I)x=\sum_{k=3}^{\infty} \left(\lambda_k-1\right) \langle e_k^*,x \rangle e_k.$$
Hence
$$N=R-I=\sum_{k=3}^{\infty} e_k \otimes \left((\lambda_k-1)e_k^*+\dfrac{1}{m_{k-1}}f_k P\right).$$
Since $\sum_{k=3}^{\infty} |\lambda_k-1|<\infty$, $\sum_{k=3}^{\infty} \frac{1}{m_{k-1}}\|f_k\|<\infty$, and $(e_k)$ and $(e_k^*)$ are bounded, $N$ is nuclear and $R=I+N$.

\begin{remark} The statement of our theorem can be sharpened:  it is apparent from the definition of $R$ that for any $\epsilon>0$, one can choose the sequence $(m_k)$ to have $\|R-I\| \leqslant \epsilon$. This proves that $I$ is in the norm closure of $\mathcal{A}_X$.
\end{remark}

\begin{remark} Although Corollary \ref{cor:main} does not apply to $\ell^{\infty}$, we can see that our example works in $\ell^{\infty}$. We thus conjecture that operators with the properties stated in Theorem \ref{thm:main} can be constructed in arbitrary Banach spaces.
\end{remark}

\subsection{Real case}
We now show the version of Theorem \ref{thm:main} for real spaces. 

\begin{theorem} Let $X$ be a separable real Banach space.
Then, there exists $R \in \mathcal{A}_X$ such that $R-I$ is nuclear.
\end{theorem}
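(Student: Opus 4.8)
The plan is to rerun the construction of Theorem~\ref{thm:complex} essentially verbatim, replacing each one-dimensional ``eigenline'' on which $S$ acted by multiplication by the root of unity $\lambda_k=e^{i\pi/m_k}$ with a two-dimensional block on which $S$ acts as the Euclidean rotation of angle $\pi/m_k$. So fix an Ovsepian--Pe\l czy\'nski system $(e_n,e_n^*)$ with $\|e_n\|=1$ and $K=\sup_n\|e_n^*\|<\infty$ (Theorem~\ref{thm:op}), let $Px=\langle e_1^*,x\rangle e_1+\langle e_2^*,x\rangle e_2$ be the projection onto $V=\mathrm{span}(e_1,e_2)$, choose a closed set $F\subset V$ that is a union of lines through the origin, has non-empty relative interior in $V$, and is $\neq V$ (for instance a ``double cone'' $\{(r\cos\varphi,r\sin\varphi):r\in\mathbb{R},\ \varphi\in[0,\pi/4]\}$ in the coordinates $e_1,e_2$), and let $(f_k)_{k\geqslant 3}$ be the forms on $V$ furnished by Proposition~\ref{prop:sep} in the real case with $d=2$ and $H=F$. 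Choose integers $(m_k)$ exactly as in the complex proof ($m_k\mid m_{k+1}$, $m_k\geqslant 15m_{k-1}$, $\sum_{k\geqslant 3}\frac{m_{k-2}}{m_{k-1}}\|f_k\|<\infty$), put $\theta_k=\pi/m_k$ and, for $k\geqslant 3$, $B_k=\mathrm{span}(e_{2k-3},e_{2k-2})$; the blocks $B_k$ are pairwise disjoint, disjoint from $V$, and together with $V$ they exhaust $(e_n)$. Define $S$ on $c_{00}$ to be the identity on $V$ and, on each $B_k$, the rotation of angle $\theta_k$ in the coordinates $(e_{2k-3},e_{2k-2})$. Since $|\cos\theta_k-1|+|\sin\theta_k|\leqslant 2\theta_k+\theta_k^2$ and $\sum_k\theta_k<\infty$, the operator $S-I$ is nuclear, so $S$ extends boundedly to $X$; finally set
$$Rx=Sx+\sum_{k\geqslant 3}\frac{1}{m_{k-1}}f_k(Px)\,e_{2k-3},$$
again of the form $I+N$ with $N$ nuclear since $\sum_k\frac{\|f_k\|}{m_{k-1}}<\infty$.

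The computations of Section~3 now transcribe with only cosmetic changes. Writing $\rho_k=S|_{B_k}$ and $\Lambda_{k,t}=\sum_{l=0}^{t-1}\rho_k^l$, the same one-line induction as in the complex proof gives
$$R^tx=S^tx+\sum_{k\geqslant 3}\frac{1}{m_{k-1}}f_k(Px)\,\Lambda_{k,t}e_{2k-3},$$
using that $S|_V=I$ (so $PS^t=P$) and that every forcing term lies in $\mathrm{span}(e_n:n\geqslant 3)$, hence is annihilated by $P$. Identifying $B_k$ with $\mathbb{C}$ via $e_{2k-3}\leftrightarrow 1$, $e_{2k-2}\leftrightarrow i$, the rotation $\rho_k$ becomes multiplication by $e^{i\theta_k}$, so $\Lambda_{k,t}e_{2k-3}$ corresponds to $\lambda_{k,t}=\sum_{l=0}^{t-1}e^{il\theta_k}$ and its Euclidean length is $|\sin(\pi t/(2m_k))|/|\sin(\pi/(2m_k))|$ --- exactly the quantity estimated in the Fact of the complex proof. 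Hence the same three properties hold: $\|\Lambda_{k,t}\|\leqslant t$; $\Lambda_{k,2m_n}=0$ for $n\geqslant k$ (a geometric sum over complete periods of $\rho_k$, since $\rho_k^{2m_k}=I$ and $m_k\mid m_n$); and $\|\Lambda_{k,t}e_{2k-3}\|\geqslant\frac{2}{\pi}m_{k-1}$ for $m_{k-1}\leqslant t\leqslant m_k$. The only genuinely new ingredient is that on each block $B_k$ the norm of $X$ and the Euclidean norm are equivalent with constants depending only on $K$: $\|ae_{2k-3}+be_{2k-2}\|\leqslant|a|+|b|$, and $\max(|a|,|b|)\leqslant K\|ae_{2k-3}+be_{2k-2}\|$ by biorthogonality, while the block projections $x\mapsto\langle e_{2k-3}^*,x\rangle e_{2k-3}+\langle e_{2k-2}^*,x\rangle e_{2k-2}$ have norm $\leqslant 2K$ uniformly in $k$. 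This lets one pass freely between the Euclidean estimates above and the norm of $X$; in particular $\|S^{2m_l}-I\|\leqslant C\sum_{k>l}m_l/m_k\leqslant C'$ uniformly (as $m_l/m_k\leqslant 15^{-(k-l)}$), so $(\|S^{2m_l}\|)_l$ is bounded and therefore $S^{2m_l}x\rightarrow x$ for every $x\in X$, exactly as before.

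With these facts the remaining argument is identical to the complex case. Set $\tilde A_R=\{x:Px\notin F\}$ and $\tilde B_R=\{x:Px\in F\}$, a partition of $X$. If $x\in\tilde A_R$ and $m_{k-1}\leqslant t\leqslant m_k$, project $R^tx$ onto $B_k$ and pass to Euclidean norms: the rotation part has norm $\lesssim K\|x\|$, while the forcing part has Euclidean norm $\frac{1}{m_{k-1}}|f_k(Px)|\,\|\Lambda_{k,t}e_{2k-3}\|\geqslant\frac{2}{\pi}|f_k(Px)|$, so $\|R^tx\|\gtrsim_K|f_k(Px)|-C_K\|x\|$, which tends to $\infty$ because $|f_k(Px)|\rightarrow\infty$ by Proposition~\ref{prop:sep}; and $\tilde A_R\cap c_{00}$ is dense in $\tilde A_R$ by truncating an approximating sequence in $c_{00}$ and restoring its $e_1,e_2$-coordinates, as in the complex proof. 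If $x\in\tilde B_R$, pick $(k_n)$ with $|f_{k_n}(Px)|\rightarrow 0$ and bound $\|R^{2m_{k_n-1}}x-x\|$ block by block: blocks $k<k_n$ contribute $0$ since $\Lambda_{k,2m_{k_n-1}}=0$, block $k_n$ contributes $\lesssim|f_{k_n}(Px)|\rightarrow 0$ (as $\|\Lambda_{k_n,2m_{k_n-1}}e_{2k_n-3}\|\leqslant 2m_{k_n-1}$), the tail $k>k_n$ is $\leqslant C\bigl(\sum_{k>k_n}\frac{m_{k-2}}{m_{k-1}}\|f_k\|\bigr)\|x\|\rightarrow 0$, and $S^{2m_{k_n-1}}x\rightarrow x$; hence $\underline{\lim}\|R^tx-x\|=0$. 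Therefore $A_R=\tilde A_R$ and $B_R=\tilde B_R$, which gives part A) of Theorem~\ref{thm:main} (since $F$ has non-empty relative interior and $F\neq V$), while $R-I=N$ is nuclear, giving part B). I expect no conceptual obstacle: the rotation-sum estimates are literally those of the complex proof, and the only point needing care is the uniform-in-$k$ comparison between the Euclidean geometry of the blocks $B_k$ and the norm of $X$ --- which is precisely what $\sup_n\|e_n^*\|<\infty$ delivers.
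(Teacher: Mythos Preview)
Your proposal is correct and is essentially the paper's own proof: replace each complex eigenvalue $\lambda_k=e^{i\pi/m_k}$ by a $2\times 2$ rotation block of angle $\theta_k=\pi/m_k$, observe that the iterated forcing terms $\Lambda_{k,t}e_{2k-3}$ (the paper's $\mu_{k,t}$) have Euclidean length $|\lambda_{k,t}|$, and then transfer the three estimates of the Fact via the uniform comparison between the Euclidean norm on each block and the $X$-norm supplied by $\sup_n\|e_n^*\|<\infty$. The only differences from the paper are cosmetic (you index blocks by $k\geqslant 3$ rather than $k\geqslant 2$, and you force into the first basis vector of each block rather than the second).
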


We keep the same notations than in the complex case for $F$, $(f_k)$, $c_{00}$, $(e_k)$,$(e_k^*)$, $K$ and $P$. We define $S$ on $c_{00}$ by the following formulas $Se_1=e_1$, $Se_2=e_2$ and for $k \geqslant 2$:
$$S{e_{2k-1}}=\cos(\theta_k)e_{2k-1}+\sin(\theta_k)e_{2k} \; \; {\rm and } \; \; S{e_{2k}}=-\sin(\theta_k)e_{2k-1}+\cos(\theta_k)e_{2k} $$
where $\theta_k=\frac{\pi}{m_k}$ and $(m_k)$ satisfy the same hypotheses of growth and divisibility as in the complex case. Thus in the (algebraic) basis $(e_k)$, $S$ is a block diagonal operator built from rotation matrices. We check that $S$ is bounded on $c_{00}$, so it extends to $X$ and we check that $S^{2m_n}x \rightarrow x$. We then define $R$ (the required operator) by
$$ Rx=Sx+\sum_{k=2}^{\infty} \dfrac{1}{m_{k-1}} f_k(Px) e_{2k}.$$
The computation of the iterates of $R$ gives 
$$ R^{t}x=S^{t}x+\sum_{k=2}^{\infty} \dfrac{1}{m_{k-1}} f_k(Px)\mu_{k,t}$$
where $\mu_{k,t}=\left(\sum_{l=0}^{t-1} \cos (l \theta_k)\right) e_{2k}-\left(\sum_{l=0}^{t-1} \sin (l \theta_k)\right) e_{2k-1}$. We thus have $\mu_{k,t}=0 \Leftrightarrow \lambda_{k,t}=0$, $\|\mu_{k,t}\| \leqslant 2t$ and considering the functionals $e_{2k-1}^*$ and $e_{2k}^*$:
$$ \| \mu_{k,t} \| \geqslant \dfrac{1}{K} \max \left(\left|\sum_{l=0}^{t-1} \cos (l \theta_k)\right|,\left|\sum_{l=0}^{t-1} \sin (l \theta_k)\right|\right) \geqslant \dfrac{1}{\sqrt{2}K} |\lambda_{k,t}|.$$
So the inequalities for $\mu_{k,t}$ are the same than $\lambda_{k,t}$ (up to a multiplicative constant) which allows to follow the previous proof (in the complex case) without other changes.

\section{The construction of H\'ajek and Smith}
We present a slightly modified version of the example of H\'ajek and Smith. The following theorem is of course a particular case of our above theorem. However, we will outline some differences between the two operators in the last section.

\begin{theorem} Let $X$ be a Banach space (real or complex) with a symmetric basis. Then, there exists $R \in \mathcal{A}_X$.
\end{theorem}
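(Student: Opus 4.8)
The plan is to imitate the construction of Section 3 but take advantage of the symmetric basis to get a cleaner, more explicit operator. Let $(e_n)_{n\ge 1}$ be a normalized symmetric basis of $X$, with biorthogonal functionals $(e_n^*)$, and recall that any permutation of the basis induces an isometry (or at least a uniformly bounded operator, after renorming we may take it isometric). First I would fix, exactly as before, a closed set $F\subset\operatorname{span}(e_1,e_2)$ which is a union of lines (or complex lines) through the origin, with non-empty relative interior and $F\neq\operatorname{span}(e_1,e_2)$, and the associated sequence of linear forms $(f_k)$ from Proposition~\ref{prop:sep} in dimension $d=2$, together with the rank-two projection $Px=\langle e_1^*,x\rangle e_1+\langle e_2^*,x\rangle e_2$. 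The diagonal/rotation part $S$ is built precisely as in the complex (resp. real) case, using a fast-growing divisible sequence $(m_k)$, so that $S^{2m_n}x\to x$ for every $x$, and then one sets $Rx=Sx+\sum_{k\ge 3}\frac1{m_{k-1}}f_k(Px)\,e_k$ (resp. the real analogue with $e_{2k}$).

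The key steps are then the same three as in Section~3, and since the basis is genuinely a Schauder basis the density arguments simplify: (1) compute the iterates $R^tx=S^tx+\sum_k\frac{\lambda_{k,t}}{m_{k-1}}f_k(Px)e_k$ by the same induction; (2) for $x$ with $Px\notin F$, use the lower bound $K\|R^tx\|\ge|\langle e_k^*,R^tx\rangle|\ge\frac{|\lambda_{k,t}|}{m_{k-1}}|f_k(Px)|-K\|x\|$ together with Fact iii) ($|\lambda_{k,t}|\ge\frac2\pi m_{k-1}$ for $m_{k-1}\le t\le m_k$) and $|f_k(Px)|\to\infty$ to get $\|R^tx\|\to\infty$; (3) for $x$ with $Px\in F$, pick $k_n$ with $|f_{k_n}(Px)|\to 0$, evaluate at $t=2m_{k_n-1}$, split the tail sum into $k<k_n$ (which vanishes by Fact ii)), the single term $k=k_n$ (bounded coefficient times $|f_{k_n}(Px)|\to0$), and $k>k_n$ (controlled by $\sum\frac{m_{k-2}}{m_{k-1}}\|f_k\|<\infty$), and combine with $S^{2m_{k_n-1}}x\to x$ to conclude $\underline{\lim}\|R^tx-x\|=0$. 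This shows $A_R=\tilde A_R$ and $B_R=\tilde B_R$ form the desired partition with non-empty interiors, so $R\in\mathcal A_X$.

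I expect the main obstacle to be purely expository rather than mathematical: the theorem is literally a special case of Theorem~\ref{thm:complex} (and its real counterpart), so the real task is to isolate what the symmetric-basis hypothesis buys and to present it in a way that sets up the comparison promised for Section~5. The symmetric basis lets one replace the abstract $(e_n,e_n^*)$ from Theorem~\ref{thm:op} by an honest basis, so $S$ acts by the transparent formula $S(\sum x_ke_k)=\sum\lambda_k x_ke_k$ (or the block-rotation version), boundedness of $S$ and of $R-S$ follows from $\sum|\lambda_k-1|<\infty$ and $\sum\frac1{m_{k-1}}\|f_k\|<\infty$ with no density juggling, and $c_{00}$ is genuinely dense with the coordinate projections uniformly bounded. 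Consequently the only thing to be careful about is that symmetry (not just unconditionality) is what makes the block-diagonal $S$ and the perturbation automatically bounded on $X$; everything else is a transcription of the Section~3 argument, and I would present it that way, pointing to the earlier lemmas and facts rather than reproving them.
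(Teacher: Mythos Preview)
Your proposal is mathematically correct --- the theorem is indeed a special case of Theorem~\ref{thm:complex} and its real counterpart, and re-running the diagonal/rotation construction of Section~3 with an honest Schauder basis in place of the biorthogonal system of Theorem~\ref{thm:op} is a valid proof. However, it is \emph{not} the argument the paper gives in Section~4, and the difference is substantive rather than cosmetic.

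The paper's Section~4 presents the original H\'ajek--Smith construction, in which $S$ is \emph{not} a diagonal operator but a \emph{cycle} (permutation) operator: one splits the tail basis into finite blocks of sizes $H_k$, lets $S$ act as the cyclic shift $e_{i,k}\mapsto e_{\sigma_k(i),k}$ within each block, and perturbs by $\sum_k f_k(Px)\,v_k$ where $v_k$ is a short ``bump'' vector supported in the $k$-th block. The finite-dimensional estimates on $w_t=(I+S+\cdots+S^{t-1})w$ replace the geometric-sum Fact of Section~3, and both the isometry of $S$ and the lower bound $\|w_t\|\ge\frac{m}{2}\|w\|$ rely essentially on the symmetric norm (permutation invariance and the decreasing-rearrangement comparison). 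This is where the symmetric-basis hypothesis is genuinely used, and it is what the later spectral comparison in Section~5 exploits: $R_{HS}$ contains the whole unit circle in its spectrum, whereas your diagonal $R$ has only the thin sequence $\{\lambda_k\}$.

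One small correction to your commentary: for the diagonal $S$ of Section~3, boundedness already follows from $\sum|\lambda_k-1|<\infty$ with merely a bounded biorthogonal system --- neither symmetry nor even unconditionality is needed there. Symmetry is what makes the \emph{cycle} operator of Section~4 an isometry; it buys nothing extra for your diagonal construction. So if you go your route you should not claim that the symmetric hypothesis is doing work --- your argument would go through verbatim for any Schauder basis, which is exactly why the paper frames Section~4 as exhibiting a \emph{different} operator rather than a streamlined proof of the same one.
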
 

Recall that a basis $(e_k)$ is said to be symmetric if for every permutation $\pi$ of the integers, $(e_{\pi(k)})$ is a basis and is equivalent to $(e_k)$. If for $x=\sum_{k=1}^{\infty} x_ke_k \in X$, we set 
$$ \|x\|_0=\sup_{|\lambda_k| \leqslant 1} \sup_{\pi} \left \| \sum_{k=1}^{\infty} x_k \lambda_k e_{\pi(k)} \right \|,$$
then $\|.\|_{0}$ is an equivalent norm on $X$ which satisfies for every $\pi$, 
$$\left \| \sum_{k=1}^{\infty} x_ke_{\pi(k)} \right \|_{0}=\left \| \sum_{k=1}^{\infty}|x_k| e_{k} \right \|_{0}$$
 and whenever $0 \leqslant x_k \leqslant y_k$, 
 $$\left \| \sum_{k=1}^{\infty} x_k e_k \right \|_{0} \leqslant \left \| \sum_{k=1}^{\infty} y_k e_k \right \|_{0}.$$ In the rest of this section, we suppose that our norm $\|.\|$ has the two precedent properties (such a norm is sometimes called a symmetric norm). \\
  
In the complex case, our operator $S$ was built with roots of unity, $S$ will now be a cycle operator. As in the original paper of H\'ajek and Smith, we will prove some finite-dimensional estimates. However, H\'ajek and Smith make some estimates in $\ell^p$ and $c_0$ and then combine it with a theorem of Tzafriri \cite{T}. We will give a direct and short proof of these estimates.

\subsection{Local estimates}
Let $Z$ be a space of dimension $H \geqslant 1$ with a symmetric basis ($e_1,\ldots,e_H)$ such that $H \geqslant 4m$ ($m$ and $H$ are positive integers). We define an operator $S$ on $Z$ by $Se_i=e_{i+1}$ for $1 \leqslant i \leqslant H-1$ and $Se_H=e_1$. We will be interested in the behaviour of the sequence: $w_t=(I+\ldots+S^{t-1})w$ where $w=\sum_{i=1}^{m}e_i-\sum_{i=m+1}^{2m}e_i$ (this is motivated by the fact that ($w_t$) will appear in the study of the iterates of the operator in the general case). We have:

\begin{lemma} i) $w_{H}=0$ and the sequence $(w_t)_{t \geqslant 1}$ is $H$-periodic. \\
ii) If $2m \leqslant t \leqslant H-2m$, $\|w_{t}\|$ attains its maximum and 
$$ \|w_{t}\| \geqslant \dfrac{m}{2} \|w\|. $$

\end{lemma}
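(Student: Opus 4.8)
The plan is to analyze the vector $w_t = (I + S + \cdots + S^{t-1})w$ explicitly in the cyclic basis, exploiting the fact that $S$ is just the cyclic shift on the $H$-dimensional space. Since $w = \sum_{i=1}^m e_i - \sum_{i=m+1}^{2m} e_i$ is a block of $m$ ones followed by $m$ minus-ones, applying $S^l$ rotates this pattern by $l$ positions (cyclically), and summing over $l = 0, \dots, t-1$ means that the $j$-th coordinate of $w_t$ counts, with sign, how many of the $t$ rotated copies of the support of $w$ land on position $j$. Concretely, the coefficient of $e_j$ in $w_t$ equals $\#\{l : 0 \le l \le t-1,\ j - l \equiv 1, \dots, m \ (\mathrm{mod}\ H)\} - \#\{l : 0 \le l \le t-1,\ j-l \equiv m+1, \dots, 2m \ (\mathrm{mod}\ H)\}$.

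For part i), note that $I + S + \cdots + S^{H-1}$ maps each $e_i$ to $e_1 + e_2 + \cdots + e_H$ (each basis vector visits every position exactly once under the $H$ cyclic shifts), so $w_H = \big(\sum_i (w\text{-coefficients})\big)(e_1 + \cdots + e_H) = 0$ because $w$ has coefficient-sum $m - m = 0$. Then $H$-periodicity of $(w_t)$ follows from $w_{t+H} = w_t + S^t w_H = w_t$ (or more directly from $w_{t+H} - w_t = S^t(I + \cdots + S^{H-1})w = S^t w_H = 0$). This part is routine.

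For part ii), the key observation is that when $2m \le t \le H - 2m$ the two "indicator count" ranges for the $+1$ block and the $-1$ block are far enough apart (the blocks have width $m$ and the sweep has length $t$, and we have room $H - t \ge 2m$ on the other side) that there are positions $j$ where the first count is $m$ (the full $+1$ block has been swept past position $j$) while the second count is $0$; at such a position the $e_j$-coefficient of $w_t$ is exactly $m$. I would pin down such an index: for $t$ in this range, take $j$ roughly in the middle of the swept-out region, so that all $m$ translates of the $+1$ block but none of the $-1$ block contribute. More carefully, one shows that the coefficient vector of $w_t$ dominates coordinatewise (in absolute value, after the symmetric-norm reduction to nonnegative coefficients) a vector having an entry equal to $m$ somewhere, while also $\|w_t\|$ is maximized at such $t$ because the coefficient vectors for other $t$ are, after the symmetric rearrangement, dominated by this one — this uses the monotonicity and rearrangement-invariance of the symmetric norm stated above. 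Then $\|w_t\| \ge \|m e_j\|_0 / 2 = \tfrac{m}{2}\|e_1\| \cdot$ wait — rather, one compares with $\|w\|$ directly: $w$ itself, after rearrangement, is $\sum_{i=1}^{2m} |coeff|\, e_i$ which by monotonicity is at most $\|\sum_{i=1}^{2m} e_i\|$, roughly of size comparable to $2m$ "units", whereas $w_t$ after rearrangement has an entry of size $m$, giving a factor $\tfrac12$ after accounting for the two blocks.

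The main obstacle I anticipate is making the comparison $\|w_t\| \ge \tfrac{m}{2}\|w\|$ clean: one must identify the right coordinate where $w_t$ has a large entry, argue that $w_t$'s symmetric rearrangement dominates (a multiple of) $w$'s, and then invoke the monotonicity and symmetry of $\|\cdot\|$. The combinatorial bookkeeping of which $l \in \{0,\dots,t-1\}$ hit a given residue class mod $H$ — and in particular verifying that the $+1$-block contributions and $-1$-block contributions don't cancel at the chosen coordinate throughout the range $2m \le t \le H-2m$ — is where the condition $H \ge 4m$ gets used and where care is needed; the claim that the maximum of $\|w_t\|$ over all $t$ is attained in this range should drop out of the same domination argument once it is set up, since every $w_t$'s rearranged coefficient sequence will be seen to be majorized by that of $w_{t_0}$ for a suitable $t_0$ in the middle of the allowed range.
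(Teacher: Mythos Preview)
Your treatment of part i) is correct and essentially identical to the paper's.

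For part ii) there is a real gap. Locating a single coordinate of $w_t$ equal to $m$ only yields $\|w_t\| \geqslant m\|e_1\|$; it does not give any comparison with $\|w\| = \|e_1 + \cdots + e_{2m}\|$, which in a general symmetric norm may be much larger than $\|e_1\|$. Your ``wait'' and the sentence that follows show you sensed this, but the fix you offer (``giving a factor $\tfrac12$ after accounting for the two blocks'') is not an argument: to invoke the monotonicity of the symmetric norm you must exhibit at least $2m$ coordinates of $w_t$ with absolute value $\geqslant m/2$, so that the rearranged coefficient sequence of $w_t$ dominates $\tfrac{m}{2}(1,\dots,1,0,\dots,0)$ with $2m$ ones.

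The paper gets this by computing $w_t$ explicitly for $2m \leqslant t \leqslant H-2m$: it is the sum of two disjointly supported triangular bumps, one positive and one negative, each with coefficient pattern $1,2,\dots,m-1,m,m-1,\dots,2,1$. Taking absolute values and rearranging decreasingly gives the sequence $m,m,(m{-}1),(m{-}1),(m{-}1),(m{-}1),\dots,1,1,1,1$; the first $2m$ entries are all $\geqslant m/2$, and then monotonicity yields $\|w_t\| \geqslant \tfrac{m}{2}\|e_1 + \cdots + e_{2m}\| = \tfrac{m}{2}\|w\|$. The same explicit form shows that this rearranged sequence does not depend on $t$ in the range $2m \leqslant t \leqslant H-2m$, and a second direct computation shows that for $t$ outside this range the rearranged sequence of $w_t$ is coordinatewise smaller --- that is how the maximality is established. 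Your outline of the maximality argument is headed in the right direction, but it too ultimately rests on knowing this triangular shape rather than on the abstract hit-counting you describe.
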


\begin{proof} i) $S$ is a cycle operator of order $H$, thus for $1 \leqslant i \leqslant H$, we have $(I+S+\ldots+S^{H-1})e_{i}=\sum_{i=1}^{H} e_{i}$, and if we put $s=\sum_{i=1}^{H} e_{i}$, we get $w_{H}=ms-ms=0$. \\
Moreover, since $S^{H}$ is the identity, we have 
$$w_{t+H}=(I+S+\ldots+S^{H-1})w+(I+\ldots+S^{t-1})w=(I+\ldots+S^{t-1})w=w_{t}.$$

ii) An easy computation shows that for $2m \leqslant t \leqslant H-2m$, $w_t$ has two bumps of height $m$ and of length $2m-1$, one with positive terms, the other with negative terms:
\begin{eqnarray*} w_t= & e_1 & +2e_2+\ldots +me_m+(m-1)e_{m+1}+\ldots e_{2m-1} \\ 
& - & (e_{t+1}+2e_{t+2}+\ldots +me_{t+m}+\ldots e_{t+2m-1}).
\end{eqnarray*}

Because the norm is symmetric, we can arrange the terms in a decreasing way and cancel the signs, hence 
\begin{eqnarray*} \|w_t\|=&\|&\! \! \! \! \! \! m(e_1+e_2)+(m-1)(e_3+e_4+e_5+e_6)+\ldots \\ & + & \! \! \! \! \! \!(e_{4m-5}+e_{4m-4}+e_{4m-3}+e_{4m-2})\|.  
\end{eqnarray*}
If we keep the coordinates which are superior to $\frac{m}{2}$, we get 
$$ \|w_t\| \geqslant \dfrac{m}{2} \|e_1+\ldots+ e_{2m}\|=\frac{m}{2}\|w\|.$$
We have also shown that if $u_t$ is the decreasing rearrangement of $|w_t|$, then for $2m \leqslant t \leqslant H-2m$ and $2m \leqslant s \leqslant H-2m$, $u_t=u_s$. Another computation shows that for $1 \leqslant s \leqslant 2m$ and $2m \leqslant t \leqslant H-2m$, $u_s \leqslant u_t$ and hence that $\|w_s\|=\|u_s\| \leqslant \|u_t\|=\|w_t\|$ because the norm is symmetric. For $H-2m \leqslant s \leqslant H$, we obtain the same estimates as in the case $1 \leqslant s \leqslant 2m$. This shows that $\|w_t\|$ attains its maximum for $2m \leqslant t \leqslant H-2m$. 
\end{proof}

\subsection{The operator of H\'ajek and Smith}
We assume that $X$ is a Banach space with a symmetric and normalized basis $(e_n)$ and write
$$ X={\rm span}(e_1,e_2) \oplus \overline{{\rm span}}(e_n, n \geqslant 3)= {\rm span}(e_1,e_2) \oplus Y.$$
Next, we are going to define $S$ on $X$ (we give it the same name as in the finite dimensional case because it will be a sum of these operators on finite blocks). We choose some sequences $(H_k)$ and $(m_k)$ of positive increasing integers such that $H_k \geqslant 4m_k$ and $H_k | H_{k+1}$. We define $\sigma_k$ the following permutation of order $H_k$: $\sigma_k=(1,2,\ldots,H_k)$, that is $\sigma_k(i)=i+1$ for $1 \leqslant i \leqslant  H_k-1$ and $\sigma_k(H_k)=1$. We enumerate the basis vectors of $Y$ as follows
$$ {\rm span}(e_n, n \geqslant 3)={\rm span}(e_{i,1}, 1 \leqslant i \leqslant H_1) \oplus {\rm span}(e_{i,2}, 1 \leqslant i \leqslant H_2) \oplus \ldots $$
So, each element $x$ of $X$ can be written 
$$ x=x_1 e_1 + x_2 e_2 + \sum_{k=1}^{\infty} \sum_{i=1}^{H_k} y_{i,k} e_{i,k} $$
and we put 
$$ Sx=x_1 e_1 + x_2 e_2 + \sum_{k=1}^{\infty} \sum_{i=1}^{H_k} y_{i,k} e_{\sigma_k(i),k}. $$
$S$ has the following properties:

\begin{lemma} i) $S$ is well defined and is an isometry. \\
ii) For $x \in X$, $\lim \|S^{H_k}x-x\|=0$.
\end{lemma}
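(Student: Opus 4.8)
The plan is to verify the two assertions about $S$ directly from its block-diagonal structure. First I would check that $S$ is well defined as an isometry. The operator $S$ fixes $e_1,e_2$ and, on each finite block $\mathrm{span}(e_{i,k},1\leqslant i\leqslant H_k)$, merely permutes the basis vectors according to the cycle $\sigma_k$. Writing $x=x_1e_1+x_2e_2+\sum_k\sum_i y_{i,k}e_{i,k}$, the vector $Sx$ has exactly the same family of coefficients $(x_1,x_2,(y_{i,k}))$, only attached to a permuted set of basis vectors. Since the basis is symmetric with a symmetric norm, permuting which basis vector carries a given coefficient does not change the norm, so $\|Sx\|=\|x\|$ for $x$ in the dense subspace $c_{00}$; by density $S$ extends to an isometry on all of $X$. (One should note that $\sigma_k$ acts only within the $k$-th block, so the global permutation of the index set is well defined and $S$ maps $X$ into $X$.)

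For part ii), the key observation is that $\sigma_k$ has order $H_k$, and because $H_k\mid H_{k+1}$, for every $j\geqslant k$ the permutation $\sigma_j$ also has order dividing $H_j$, and in particular $\sigma_j^{H_n}=\mathrm{id}$ whenever $H_j\mid H_n$, i.e. whenever $j\leqslant n$. Hence, for a fixed basis vector $e_{i,k}$ with $k\leqslant n$, we have $S^{H_n}e_{i,k}=e_{i,k}$; likewise $S^{H_n}e_1=e_1$ and $S^{H_n}e_2=e_2$. Therefore $S^{H_n}$ fixes every basis vector lying in the first $n$ blocks together with $e_1,e_2$; equivalently $S^{H_n}$ agrees with the identity on $\mathrm{span}(e_1,e_2,e_{i,k}:k\leqslant n)$, which is a subspace whose union over $n$ is dense in $X$. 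The standard $3\varepsilon$-argument then finishes it: given $x\in X$ and $\varepsilon>0$, pick $y$ in this dense subspace with $\|x-y\|\leqslant\varepsilon$ and $n$ large enough that $S^{H_n}y=y$; since $S$ (hence $S^{H_n}$) is an isometry,
$$\|S^{H_n}x-x\|\leqslant\|S^{H_n}(x-y)\|+\|S^{H_n}y-y\|+\|y-x\|\leqslant 2\varepsilon.$$
As $\varepsilon$ was arbitrary and the tail of $n$'s only gets larger, $\lim_n\|S^{H_n}x-x\|=0$.

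I expect no serious obstacle here: the whole point of choosing $\sigma_k$ to be a single cycle of order $H_k$ with the divisibility $H_k\mid H_{k+1}$ is precisely to make $S^{H_n}$ eventually act as the identity on larger and larger blocks, and the symmetric-norm hypothesis is exactly what makes $S$ an isometry. The only minor care needed is to make the density of $\bigcup_n\mathrm{span}(e_1,e_2,e_{i,k}:k\leqslant n)$ explicit — this follows because its span contains every $e_n$, and $(e_n)$ is a Schauder basis of $X$ — and to observe that boundedness of $S$ is automatic from the isometry property, so no separate uniform bound on $\|S^{H_n}\|$ is required (it equals $1$).
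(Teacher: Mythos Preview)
Your proposal is correct and follows essentially the same route as the paper: the symmetric norm makes the permutation $S$ an isometry, and the divisibility $H_k\mid H_{k+1}$ forces $S^{H_n}$ to fix every basis vector in the first $n$ blocks, after which a density/tail argument gives $S^{H_n}x\to x$. The only cosmetic difference is that the paper works directly with the basis expansion $y=\sum_{l,i} y_{i,l}e_{i,l}$ and bounds $\|S^{H_k}y-y\|$ by twice the norm of the tail $\sum_{l>N}$, whereas you approximate by a finitely supported vector and run a $3\varepsilon$ argument; these are equivalent formulations of the same estimate.
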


\begin{proof}
i) is because the norm is symmetric. For ii), it is enough to check it for $y \in Y$. Fix $\epsilon>0$, there exists an integer $N$ such that 
$$ \left \| \sum_{l=N+1}^{\infty} \sum_{i=1}^{H_l} y_{i,l} e_{i,l} \right\| \leqslant \epsilon, \; \; {\rm where} \; \; y=\sum_{l=1}^{\infty} \sum_{i=1}^{H_l} y_{i,l} e_{i,l}. $$
For $k \geqslant N$, $H_k$ is a multiple of $H_N$, so for $1 \leqslant l \leqslant N$ and $1 \leqslant i \leqslant H_l$, $S^{H_k}(e_{i,l})=e_{i,l}$, hence  
$$ S^{H_k}y=\sum_{l=1}^{N} \sum_{i=1}^{H_l} y_{i,l} e_{i,l} + S^{H_k}\left(\sum_{l=N+1}^{\infty} \sum_{i=1}^{H_l} y_{i,l} e_{i,l}\right)$$
then 
$$ S^{H_k}y-y=-\sum_{l=N+1}^{\infty} \sum_{i=1}^{H_l} y_{i,l} e_{i,l}+S^{H_k}\left(\sum_{l=N+1}^{\infty} \sum_{i=1}^{H_l} y_{i,l} e_{i,l}\right).$$
Since $S$ is an isometry, we get for $k \geqslant N$: $\|S^{H_k}y-y\| \leqslant 2\epsilon$.
\end{proof}

We now define the operator $R=R_{HS}$ (we use the same notation and assumptions as in the previous section for $F$, $(f_k)$ and $P$, and the definition of the sets $\tilde{A}_R$ and $\tilde{B}_R$ also remain unchanged). For $x \in X$, we put    

$$ Rx=Sx+\sum_{k=1}^{\infty} f_k(Px) v_k $$ where $$v_k=\epsilon_k\left(\sum_{i=1}^{m_k} e_{i,k} - \sum_{i=m_k+1}^{2 m_k} e_{i,k}\right).$$ 
The iterates of $R$ are given by (same proof as in previous section)

\begin{lemma} $R^{t}x=S^{t}x+\sum_{k=1}^{\infty} f_k(Px) v_{k,t}$ whenever $x \in X$, $t \geqslant 1$, with $v_{k,t}=(I+S+\ldots S^{t-1})v_k$.
\end{lemma}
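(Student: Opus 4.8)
The plan is to prove the formula $R^t x = S^t x + \sum_{k=1}^\infty f_k(Px) v_{k,t}$ with $v_{k,t} = (I + S + \cdots + S^{t-1})v_k$ by induction on $t$, exactly mirroring the computation carried out in the complex case. The base case $t = 1$ is immediate from the definition of $R$, since $v_{k,1} = v_k$. For the inductive step I would write $R^{t+1}x = R(R^t x) = R(S^t x) + R\big(\sum_{k=1}^\infty f_k(Px) v_{k,t}\big)$ and apply the definition of $R$ to each piece.

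\textbf{Key steps.} First, $R(S^t x) = S^{t+1}x + \sum_{k=1}^\infty f_k(P S^t x) v_k$. The crucial observation, just as before, is that $S$ fixes $e_1$ and $e_2$, so $P S^t x = Px$; hence this contributes $S^{t+1}x + \sum_k f_k(Px) v_k$. Second, for the vector $z := \sum_{k=1}^\infty f_k(Px) v_{k,t}$, I must check that $Pz = 0$: each $v_{k,t}$ lies in $\overline{\mathrm{span}}(e_{i,k} : 1 \le i \le H_k) \subset Y$ (it is a sum of translates of $v_k$ under the cycle $\sigma_k$, which permutes only the $k$-th block), and $P$ annihilates $Y$. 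Therefore $Rz = Sz + \sum_k f_k(Pz)v_k = Sz = \sum_{k=1}^\infty f_k(Px)\, S v_{k,t}$. Adding the two contributions gives
\[
R^{t+1}x = S^{t+1}x + \sum_{k=1}^\infty f_k(Px)\big(v_k + S v_{k,t}\big) = S^{t+1}x + \sum_{k=1}^\infty f_k(Px)\, v_{k,t+1},
\]
since $v_k + S v_{k,t} = v_k + S(I + S + \cdots + S^{t-1})v_k = (I + S + \cdots + S^{t})v_k = v_{k,t+1}$. This closes the induction.

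\textbf{Convergence issues.} Before the induction is legitimate I should note that the series defining $Rx$ converges: $\sum_k f_k(Px) v_k$ is a convergent series in $X$ because $\|v_k\|$ is controlled (by symmetry and normalization, $\|v_k\| = \|e_1 + \cdots + e_{2m_k}\|$, which is at most $2m_k$) and one chooses the growth of $(m_k)$ and $(H_k)$ — exactly as the summability hypothesis $\sum_k \frac{m_{k-2}}{m_{k-1}}\|f_k\| < \infty$ was imposed earlier — so that $\sum_k \|f_k\|\,\|v_k\|$ is summable; thus $R$ is a well-defined bounded operator and the rearrangements in the inductive step (splitting $R$ of a sum into the sum of $R$ applied termwise) are justified by continuity and linearity of $R$.

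\textbf{Main obstacle.} The only genuine point requiring care is the identity $Pz = 0$ together with the termwise application of $R$ to the infinite sum; once one has recorded that each $v_{k,t}$ stays inside the $k$-th coordinate block $Y_k \subset Y = \ker P$ and that $S$ preserves each $Y_k$, everything else is the same bookkeeping as in the complex case, and the author's remark "same proof as in previous section" is fully warranted.
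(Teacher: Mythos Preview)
Your inductive argument is correct and is exactly the computation the paper has in mind when it writes ``same proof as in previous section'': the two key points $PS^{t}x=Px$ (because $S$ fixes $e_1,e_2$) and $Pz=0$ (because each $v_{k,t}$ lives in the $k$-th block of $Y$) are precisely the analogues of the identities used in Section~3.

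One minor inaccuracy worth fixing in your ``Convergence issues'' paragraph: you have forgotten the normalising constant $\epsilon_k$ in the definition of $v_k$, so the relevant bound is not $\|v_k\|\leqslant 2m_k$ but rather $\|v_k\|\leqslant 2/m_k$ (coming from the local estimate $\|v_{k,2m_k}\|\geqslant \tfrac{m_k}{2}\|v_k\|$ and the choice $\|v_{k,2m_k}\|=1$), and the summability condition in this section is $\sum_k \|f_k\|/a_k<\infty$, not the Section~3 condition you quote; this is what makes $\sum_k \|f_k\|\,\|v_k\|<\infty$. This does not affect your proof of the lemma, since the boundedness of $R$ is established in the paper before the lemma is stated.
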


Now, let us choose the various constants: first, we introduce a sequence of positive and increasing integers ($a_k$) such that 
$$ \sum_{k=1}^{\infty} \dfrac{\|f_k\|}{a_k} < \infty \;\; {\rm and} \;\; 1+a_k \; | \; 1+a_{k+1} .$$
Then, we put  $m_k=a_1 \ldots a_k$, $H_k=2(m_k+m_{k+1})$. We have 
$$ \dfrac{H_{k+1}}{H_k}=\dfrac{m_{k+1}+m_{k+2}}{m_k+m_{k+1}}=\dfrac{\prod_{i=1}^{k+1}a_i (1+a_{k+2}) }{\prod_{i=1}^{k}a_i(1+a_{k+1})}=a_{k+1} \dfrac{1+a_{k+2}}{1+a_{k+1}}$$
so we get $H_k | H_{k+1}$. \\  

The constant $\epsilon_k$ is chosen such that $\|v_{k,2m_k}\|=1$. The finite dimensional estimates give $\|v_k\| \leqslant \frac{2}{m_k}$. We deduce from this that $R$ is well defined and bounded because
$$ \left \| \sum_{k=1}^{\infty} f_k(Px)v_k \right \| \leqslant \left( \sum_{k=1}^{\infty} 2 \dfrac{\|f_k\|}{m_k} \right) \|P\| \|x\| \leqslant \left( \sum_{k=1}^{\infty} 2 \dfrac{\|f_k\|}{a_k} \right) \|P\| \|x\|.  $$
The following lemma is now a reformulation of the finite dimensional estimates. 

\begin{lemma}\label{lem:reformul} i) For fixed $k$, $v_{k,H_k}=0$ and the sequence $(v_{k,t})_t$ is $H_k$-periodic. \\
ii) For $2m_k \leqslant t \leqslant H_k-2m_k$, $\|v_{k,t}\|$ is maximal (in $t$) and $\|v_{k,t}\|=1$. \\
iii) For all $t$, $\|v_{k,t}\| \leqslant \frac{2t}{m_k}$.
\end{lemma}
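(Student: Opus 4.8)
The plan is to deduce the three assertions directly from the local estimates of Section 4.1, applied to the $k$-th block. The first thing I would record is that $v_k$ belongs to the finite-dimensional subspace $Z_k = \mathrm{span}(e_{i,k} : 1 \leqslant i \leqslant H_k)$, that $S$ leaves $Z_k$ invariant and acts on it exactly as the cyclic shift of order $H_k$, and hence that $v_{k,t} = (I + S + \cdots + S^{t-1})v_k$ lies in $Z_k$ for every $t$. The restriction of the symmetric norm to $Z_k$ is again a symmetric norm, since any permutation of the $H_k$ basis vectors of $Z_k$ extends to a permutation of the integers fixing the other coordinates; so $(Z_k, (e_{i,k})_i, S|_{Z_k})$ is precisely the model of Section 4.1 with $H = H_k$ and $m = m_k$, and $v_k = \epsilon_k w$, $v_{k,t} = \epsilon_k w_t$. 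The hypothesis $H \geqslant 4m$ required there holds because $H_k = 2(m_k + m_{k+1}) \geqslant 4m_k$, using $m_{k+1} \geqslant m_k$.

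Granting this identification, part i) is immediate from part i) of the local lemma: $v_{k,H_k} = \epsilon_k w_{H_k} = 0$, and the $H_k$-periodicity of $(v_{k,t})_t$ is inherited from that of $(w_t)_t$. For part ii) I would appeal to part ii) of the local lemma, which gives not only that $\|w_t\|$ attains its maximum (over $t \geqslant 1$) on the range $2m \leqslant t \leqslant H - 2m$, but also — via the decreasing-rearrangement computation in its proof — that $\|w_t\|$ is constant there. Scaling by $\epsilon_k$, the same is true of $\|v_{k,t}\|$. Since $2m_k$ belongs to $[2m_k, H_k - 2m_k]$ (again because $H_k \geqslant 4m_k$) and $\epsilon_k$ is, by construction, the constant making $\|v_{k,2m_k}\| = 1$, it follows that $\|v_{k,t}\| = 1$ for all $t$ in that range and that this is the maximal value.

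Part iii) then comes from the same normalization together with the lower bound in the local lemma: $1 = \|v_{k,2m_k}\| = \epsilon_k \|w_{2m_k}\| \geqslant \epsilon_k \frac{m_k}{2}\|w\| = \frac{m_k}{2}\|v_k\|$, so $\|v_k\| \leqslant \frac{2}{m_k}$; and since $S$ is an isometry, $\|v_{k,t}\| \leqslant \sum_{l=0}^{t-1} \|S^l v_k\| = t\|v_k\| \leqslant \frac{2t}{m_k}$.

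The statement is essentially bookkeeping, so I do not expect a genuine obstacle; the only points needing a little care are confirming that the $k$-th block really is an isometric copy of the Section 4.1 model — so that the symmetric-norm rearrangement argument transfers without change — and tracking the normalizing constant $\epsilon_k$ consistently through parts ii) and iii).
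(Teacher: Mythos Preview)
Your proof is correct and follows essentially the same approach as the paper: i) and ii) are reduced to the finite-dimensional estimates of Section~4.1 via the identification $v_{k,t}=\epsilon_k w_t$ on the $k$-th block, and iii) comes from $\|v_k\|\leqslant \frac{2}{m_k}$ (itself a consequence of the normalization and the lower bound in the local lemma) together with the triangle inequality and the fact that $S$ is an isometry. You have simply made explicit the bookkeeping that the paper leaves implicit.
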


\begin{proof} i) and ii) were proved before. For iii), use the triangle inequality and $\|v_k\| \leqslant \frac{2}{m_k}$.
\end{proof}

The proof of the theorem is now very close to the one we produced before. First, because the norm is symmetric and the $(v_{k,t})_{k \geqslant 1}$ have disjoint supports, we get for $k \geqslant 1$ and $x \in X$
$$ \left\|\sum_{l=1}^{\infty} f_l(Px) v_{l,t} \right\| \geqslant  |f_k(Px)| \|v_{k,t}\|.$$ 
For each $t$, there exists $k$ such that $2m_k \leqslant t \leqslant 2m_{k+1}=H_k-2m_k$, thus Lemma \ref{lem:reformul}  gives  $\|v_{k,t}\|=1$ and we see that $\|R^tx\| \rightarrow \infty$ when $x \in \tilde{A}_R$. When $x \in \tilde{B}_R$, the proof is also in the same spirit as in section 3 (divide the sum into three parts) and if $|f_{k_n}(Px)| \rightarrow 0$, check that $R^{H_{k_n-1}}x \rightarrow x$ using Lemma \ref{lem:reformul}. Details are left to the reader.  

\section{Further results and remarks}

\subsection{Difference between the operators}

We denote by $\sigma(R)$ the spectrum of an operator $R$, by $\sigma_p(R)$ the set of its eigenvalues, by $r(R)$ its spectral radius, and we only consider complex spaces. Let also $\overline{\mathbb{D}}$ denote the unit closed complex disk and $\partial \mathbb{D}$ the unit complex circle. Let us recall that $\mathcal{A}_X$ is the set of operators which satisfy part A) of Theorem \ref{thm:main} and that by M\"uller and Vr{\v{s}}ovsk\'y's result (see the introduction), if $\sum_{k=1}^{\infty} \frac{1}{\|R^k\|}<\infty$ then $A_R$ is dense. Suppose that $R \in \mathcal{A}_X$, then we must have $r(R)=1$. Indeed, if $r(R)<1$, by the spectral radius formula, $\|R^k\| \rightarrow 0$ which of course contradicts $A_R \neq \emptyset$. If $r(R)>1$, then if we fix $1<a<r(R)$, we get for $k$ large enough $\|R^k\| \geqslant a^k$ (still with the spectral radius formula), thus $\sum_{k=1}^{\infty} \frac{1}{\|R^k\|}<\infty$ and $A_R$ is dense which is impossible. In particular, any operator $R \in \mathcal{A}_X$ has a spectrum contained in the unit closed disk and its spectrum has to intersect the unit circle. More precisely, in the two studied examples (which are built with a strong analogy), we have: 

\begin{proposition} Assume that in the construction of the operator $R$ from section 3, $(e_k)$ is a normalized Schauder basis, then $\sigma(R)=\{\lambda_k, k \geqslant 1\}$. On the other hand, if we take the operator $R_{HS}$ from section 4, then $\partial \mathbb{D} \subset \sigma(R_{HS}) \subset \overline{\mathbb{D}}$.
\end{proposition}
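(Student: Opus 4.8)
The plan is to treat the two operators separately, in each case reducing the computation of the spectrum to an elementary observation about the ``unperturbed'' part, using that $R-I$ (resp.\ $R_{HS}-I$) is compact.

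Consider first the operator $R$ of Section 3, with $(e_k)$ a normalized Schauder basis, so that $Sx=\sum_k\lambda_kx_ke_k$ and $Rx=Sx+\sum_{k\geqslant 3}\tfrac1{m_{k-1}}f_k(Px)e_k$. For the inclusion $\{\lambda_k,\,k\geqslant 1\}\subseteq\sigma(R)$ I would note that $Pe_k=0$ for $k\geqslant 3$, hence $Re_k=Se_k=\lambda_ke_k$, so every $\lambda_k$ with $k\geqslant 3$ is an eigenvalue of $R$; since $\sigma(R)$ is closed and $\lambda_k\to 1=\lambda_1=\lambda_2$, this yields $\{\lambda_k,\,k\geqslant 1\}=\overline{\{\lambda_k,\,k\geqslant 3\}}\subseteq\sigma(R)$. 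For the reverse inclusion, fix $\mu\notin\{\lambda_k,\,k\geqslant 1\}$; then $\mu\neq 1$, so $R-\mu I=(1-\mu)I+(R-I)$ is a compact perturbation of the invertible operator $(1-\mu)I$, hence Fredholm of index $0$, and thus invertible as soon as it is injective. If $Rx=\mu x$, the first two coordinate equations give $(1-\mu)x_1=(1-\mu)x_2=0$, so $Px=0$; then the perturbation term of $R$ vanishes on $x$ and the $k$-th coordinate equation reads $(\lambda_k-\mu)x_k=0$, forcing $x_k=0$ for all $k$, i.e.\ $x=0$. Hence $R-\mu I$ is injective, so invertible, and $\mu\notin\sigma(R)$; this proves $\sigma(R)=\{\lambda_k,\,k\geqslant 1\}$.

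For $R_{HS}$ from Section 4 I would first get $\partial\mathbb{D}\subseteq\sigma(R_{HS})$ by producing eigenvectors. On the $k$-th block ${\rm span}(e_{i,k},\,1\leqslant i\leqslant H_k)$, for any $H_k$-th root of unity $\zeta$ the vector $z=\sum_{i=1}^{H_k}\zeta^{-i}e_{i,k}$ satisfies $Sz=\zeta z$ (the cyclic permutation $\sigma_k$ of order $H_k$ has all $H_k$-th roots of unity as eigenvalues), and $Pz=0$ since $z$ has no $e_1,e_2$-component; hence $R_{HS}z=Sz=\zeta z$. Therefore $\sigma_p(R_{HS})$ contains every $H_k$-th root of unity for every $k$, and since $H_k\to\infty$ these roots are dense in $\partial\mathbb{D}$; as $\sigma(R_{HS})$ is closed, $\partial\mathbb{D}\subseteq\sigma(R_{HS})$. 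For $\sigma(R_{HS})\subseteq\overline{\mathbb{D}}$ I would invoke the discussion opening Section 5: since $R_{HS}\in\mathcal{A}_X$ we must have $r(R_{HS})=1$ (if $r(R_{HS})<1$ then $\|R_{HS}^t\|\to 0$, contradicting $A_{R_{HS}}\neq\emptyset$; if $r(R_{HS})>1$ then $\sum_t\|R_{HS}^t\|^{-1}<\infty$, so by M\"uller and Vr{\v{s}}ovsk\'y \cite{MV} the set $A_{R_{HS}}$ would be dense, contradicting that $B_{R_{HS}}$ has non-empty interior). Hence $\sigma(R_{HS})\subseteq\overline{\mathbb{D}}$, and altogether $\partial\mathbb{D}\subseteq\sigma(R_{HS})\subseteq\overline{\mathbb{D}}$.

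The only non-bookkeeping ingredients are the Fredholm alternative used in the first part (a compact perturbation of $(1-\mu)I$ with $\mu\neq 1$ is invertible iff injective), which relies on the fact---already established in Section 3---that $R-I$ is nuclear, hence compact, and, for the bound on $\sigma(R_{HS})$, the theorem of M\"uller and Vr{\v{s}}ovsk\'y quoted in the introduction; everything else is a direct reading of the coordinate formulas recorded in Sections 3 and 4. I expect assembling these ingredients, rather than any computation, to be the only real point.
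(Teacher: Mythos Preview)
Your proof is correct and follows essentially the same route as the paper: for $R$ you use that $R-I$ is compact to reduce the spectral question to an eigenvalue computation (the paper phrases this via $\sigma(N)=\sigma_p(N)\cup\{0\}$ for the compact $N=R-I$, you via the Fredholm alternative, but the coordinate analysis is identical), and for $R_{HS}$ you exhibit the $H_k$-th roots of unity as eigenvalues on each block and invoke the $r(R_{HS})=1$ argument from the opening of Section~5 for the upper bound. The only cosmetic difference is that the paper argues density of $\bigcup_k\mathbb{U}_{H_k}$ via the infinite-subgroup-of-the-circle fact (using $H_k\mid H_{k+1}$), whereas your observation that equally spaced $H_k$-th roots with $H_k\to\infty$ are dense is just as valid and slightly more direct.
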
 

\begin{proof} Our operator can be written as $I+N$, thus we have $\sigma(R)=\{1+\lambda, \lambda \in \sigma(N)\}$. Since $N$ is compact, $\sigma(N)=\sigma_p(N) \cup \{0\}$. Let $\lambda \in \sigma_p(N)$ and $x \neq 0, x=\sum_{k=1}^{\infty} x_k e_k$ such that $Nx=\lambda x$, then
$$ \begin{cases}0=\lambda x_k \; \; {\rm for} \; \; k \in \{1,2\}.
\\(\lambda_k-1)x_k+\dfrac{f_k(Px)}{m_{k-1}}=\lambda x_k \; \; {\rm for} \; \; k \geqslant 3.  \end{cases} \\ $$
If $\lambda \neq 0$, then $x_1=x_2=0$, that is $Px=0$ and we get that $\lambda=\lambda_k-1$ for some $k \geqslant 3$. On the other hand, $Ne_k=(\lambda_k-1) e_k$ for $k \geqslant 3$, hence $\sigma(N)=\sigma_p(N) \cup \{0\}=\{\lambda_k-1, k \geqslant 1\}$ and $\sigma(R)=\{\lambda_k, k \geqslant 1\}$. \\
Now, the operator $S_{HS}$ of section 4 (associated to $R_{HS}$) has an infinite matrix which is block diagonal with identity on first block, and then some matrices of Frobenius which have $X^{H_k}-1$ as characteristic polynomial on the $k$-th block (i.e ${\rm span}(e_{i,k}, 1 \leqslant i \leqslant H_k)$) , so by taking an eigenvector associated to a $H_k$ root of the unity for the matrix of Frobenius on the $k$-th block and completing it with some zeros, we see that 
$$ G=\bigcup_{k=1}^{\infty} \mathbb{U}_{H_k} \subset \ \sigma_p(R_{HS}) \subset \sigma(R_{HS}) $$
where $\mathbb{U}_{H_k}$ denotes the group of the $H_k$-th roots of the unity. Since $H_{k} | H_{k+1}$, $G$ is itself an infinite group of $\partial \mathbb{D}$ as an union of increasing groups, thus $G$ is dense in $\partial \mathbb{D}$ and since $\sigma(R_{HS})$ is a closed subset, we get $\partial \mathbb{D} \subset \sigma(R_{HS})$. 
\end{proof}

The spectrum of $R$ is thus a sequence that goes very fast to 1 whereas the spectrum of $R_{HS}$ is not countable. 

\begin{remark} With the notation above, we can show that if $F$ (the set defined page 4) is more than one line, then $0 \notin \sigma_p(N)$, that is $1 \notin \sigma_p(R)$.
Indeed, if $x=\sum_{k=1}^{\infty} x_k e_k$ is such that $Nx=0$, then 
$$ (\lambda_k-1)x_k+\dfrac{f_k(Px)}{m_{k-1}}=0 \; \; {\rm for} \; \; k \geqslant 3.$$
If $Px \neq 0$, then since $|\lambda_k-1| \leqslant \frac{\pi}{m_k}$, we have 
$$ |x_k| \geqslant \dfrac{|f_k(Px)|m_k}{\pi m_{k-1}} \geqslant \dfrac{15|f_k(Px)|}{\pi}. $$
As $F$ is more than one line, we can find $\epsilon>0$ and a subsequence of $(f_k)$ with $|f_k(Px)| \geqslant \epsilon$. This is a contradiction because $|x_k| \rightarrow 0$ since $(e_k)$ is normalized, so $Px=0$ and $x=0$. If $F$ is just one line (which is a degenerated case for the operator $R$), then it is possible to have $0 \in \sigma_p(N)$. Indeed if $F={\rm span}(e_1)$, then by Proposition \ref{prop:sep}, $F \subseteq {\rm Ker} f_k$ for all $k$, so $f_k(e_1)=0$ and $Ne_1=0$.
\end{remark}

\subsection{Size of the set $\mathcal{A}_X$}

Now, we go back to a more general problem. We have shown that for any separable Banach space, $\mathcal{A}_X$ is non-empty. Hence, a natural question is: what can we say about the size of $\mathcal{A}_X$? It is clear that it cannot be dense for the norm operator because when $R \in \mathcal{A}_X$, $\|R\|>1$. However, we have:

\begin{proposition} \label{prop:dens} If $X$ is a separable Banach space (or more generally satisfies the assumptions of Corollary \ref{cor:main}), then the set $\mathcal{A}_X$ is dense for the strong operator topology on $\mathcal{L}(X)$.
\end{proposition}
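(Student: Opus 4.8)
The plan is to reduce the strong-density statement to the already-established existence result (Theorem \ref{thm:main} / Corollary \ref{cor:main}) by a standard block-decomposition trick. Recall that a base of neighbourhoods of a given $T \in \mathcal{L}(X)$ for the strong operator topology is given by the sets
$$ V(T; x_1,\dots,x_n;\varepsilon)=\{U \in \mathcal{L}(X): \|Ux_i-Tx_i\|<\varepsilon \text{ for } 1\leqslant i\leqslant n\}, $$
where $x_1,\dots,x_n \in X$ and $\varepsilon>0$. So it suffices to show that every such neighbourhood meets $\mathcal{A}_X$. The key point is that $X$ splits as a direct sum of a finite-dimensional piece carrying the vectors $x_i$ and an infinite-dimensional complemented separable piece on which we can install an operator from Theorem \ref{thm:main}, while leaving the first piece essentially untouched.

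First I would fix $T$ and a neighbourhood $V(T;x_1,\dots,x_n;\varepsilon)$, and set $E=\mathrm{span}(x_1,\dots,x_n)$, a finite-dimensional subspace. Since $E$ is finite-dimensional it is complemented: write $X=E\oplus W$ with a bounded projection $Q$ onto $E$. Because $X$ is infinite-dimensional and separable (or satisfies the hypotheses of Corollary \ref{cor:main}), the complement $W$ still contains a closed, separable, infinite-dimensional subspace $X_0$ which is complemented in $W$, hence in $X$; write $X=E\oplus X_0\oplus W_0$. Now let $R_0\in\mathcal{A}_{X_0}$ be an operator furnished by Theorem \ref{thm:main} (applied to $X_0$), and define $U=T+(R_0-I)\Pi$, where $\Pi$ is the bounded projection of $X$ onto $X_0$ along $E\oplus W_0$. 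On the vectors $x_i\in E$ we have $\Pi x_i=0$, so $Ux_i=Tx_i$ and thus $U\in V(T;x_1,\dots,x_n;\varepsilon)$ for free (indeed with $\varepsilon$-error $0$). It remains to check $U\in\mathcal{A}_X$.

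For that I would analyse the iterates of $U$. The subspace $X_0$ is $U$-invariant and $U|_{X_0}=R_0$; more importantly, because $\Pi$ is a projection with range $X_0$ and $\Pi^2=\Pi$, one computes that for any $x$, the $X_0$-component of $U^tx$ is governed by $R_0$ applied to $\Pi x$ together with "mixing" terms coming from $T$. The cleanest route is to first reduce to the case $T=I$ on the complement, i.e.\ to take $U=T\oplus R_0\oplus I$ type decomposition is not literally available since $T$ need not respect the splitting; so instead I would argue as follows. Consider the quotient behaviour: modulo $X_0$, $U$ agrees with $T$, so if $\|U^tx\|$ stays bounded then in particular $\Pi U^t x=R_0^t(\Pi x)+(\text{terms})$ must be controlled. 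The honest fix, and the one I expect to actually use, is to not perturb by $T$ at all on $X_0$ but rather to \emph{replace} $T$ there: define $U=R_0\Pi+T(I-\Pi)$. Then $X_0$ is invariant with $U|_{X_0}=R_0$, the subspace $E\oplus W_0$ is invariant with $U$ acting as $T|_{E\oplus W_0}$ there, and $U$ is block upper- or lower-triangular with respect to $X=X_0\oplus(E\oplus W_0)$. Hence $U^t$ is block triangular with diagonal blocks $R_0^t$ and $(T|_{E\oplus W_0})^t$; splitting $x=\Pi x + (I-\Pi)x$ and using that the off-diagonal block is a fixed bounded operator, one shows $\|U^t x\|\to\infty$ iff $\|R_0^t(\Pi x)\|\to\infty$, and otherwise $\underline{\lim}\|U^tx-x\|=0$ by combining $B_{R_0}$-behaviour on $X_0$ with the convergence $R_0^{2m_n}\to I$ strongly (from the Lemma on $S$) applied to the cross terms. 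This gives $A_U=A_{R_0}+(E\oplus W_0)$, $B_U=B_{R_0}+(E\oplus W_0)$, exactly as in the proof of Corollary \ref{cor:main}, so $U\in\mathcal{A}_X$; and $Ux_i=Tx_i$ since $x_i\in E\subset E\oplus W_0$.

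The main obstacle is precisely this last verification: ensuring that perturbing/replacing $T$ on the infinite-dimensional summand does not destroy the partition property, i.e.\ that no vector whose $X_0$-component lies in $A_{R_0}$ gets its orbit "pulled back" to boundedness by the $T$-dynamics on the complement, and conversely that the $B$-part survives. Block-triangularity is what makes this work, and the convergence $R_0^{2m_n}\to I$ (Lemma in Section 3) is the tool that handles the cross term in the $B$-case; I would spell out that the off-diagonal contribution to $U^{2m_n}x-x$ is $(\text{fixed bounded op})\circ(R_0^{2m_n}-I)(\Pi x)\to 0$. Everything else — that the finitely many test vectors are matched exactly, that $U$ is bounded, that $X_0$ can be chosen complemented — is routine, the first by construction and the last two from the standing hypotheses.
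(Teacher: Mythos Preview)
Your construction does not yield an operator in $\mathcal{A}_X$. There are two connected errors. First, the subspace $E\oplus W_0$ is \emph{not} $U$-invariant for $U=R_0\Pi+T(I-\Pi)$: if $x\in E\oplus W_0$ then $Ux=Tx$, and nothing prevents $Tx$ from having a nonzero $X_0$-component. With respect to $X=X_0\oplus(E\oplus W_0)$ the correct block form is
\[
U=\begin{pmatrix} R_0 & \Pi T|_{E\oplus W_0}\\ 0 & (I-\Pi)T|_{E\oplus W_0}\end{pmatrix},
\]
so the lower-right diagonal block is $(I-\Pi)T$, not $T$, and the off-diagonal block of $U^t$ is the convolution-type sum $\sum_{j=0}^{t-1}R_0^{t-1-j}\,(\Pi T)\,\big((I-\Pi)T\big)^{j}$, which is certainly not a ``fixed bounded operator''. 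Second, and more seriously, membership in $\mathcal{A}_X$ requires that $\{A_U,B_U\}$ partition \emph{all} of $X$. Take any $x\in E\oplus W_0$ (so $\Pi x=0$): the $(E\oplus W_0)$-component of $U^tx$ is $\big((I-\Pi)T\big)^t x$. For an arbitrary $T$ there is no reason whatsoever for this orbit either to tend to infinity or to return arbitrarily close to $x$ along a subsequence; for instance $T$ could send $x$ to $0$ while $\Pi Tx=0$, or could rotate $x$ to a fixed vector $y\ne x$ and stay there. Thus in general $x\notin A_U$ and $x\notin B_U$, and $U\notin\mathcal{A}_X$. The analogy with Corollary~\ref{cor:main} breaks down precisely because there the complementary block was the \emph{identity}, not an arbitrary $T$.

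The paper avoids this difficulty entirely by a different route: it observes that $\mathcal{A}_X$ is invariant under similarity ($S=JRJ^{-1}$ gives $A_S=J(A_R)$, $B_S=J(B_R)$), and then quotes a general lemma (from \cite{BM}) to the effect that if for every $N$ one can find $x_1,\dots,x_N$ with $x_1,\dots,x_N,Rx_1,\dots,Rx_N$ linearly independent, then $\mathrm{Sim}(R)$ is SOT dense in $\mathcal{L}(X)$. For the concrete operator $R$ built in Section~3 this hypothesis is immediate (take $x_k=e_{2k+1}+e_{2k+2}$, since $Re_j=\lambda_j e_j$ for $j\geqslant 3$ with distinct $\lambda_{2k+1}\ne\lambda_{2k+2}$). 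Hence $\mathrm{Sim}(R)\subset\mathcal{A}_X$ is already SOT dense, and no bespoke construction near a given $T$ is needed.
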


To prove this, we shall use the following lemma \cite{BM} (p 45).

\begin{lemma} \label{lem:bm}
Let $X$ be a locally convex topological vector space, and $R \in \mathcal{L}(X)$. Assume that for each integer $N$, one can find $x_1,\ldots,x_N \in X$ such that the vectors $x_1,\ldots,x_N,R(x_1),\ldots,R(x_N)$ are linearly independent. Then ${\rm Sim}(R)$ is SOT dense in $\mathcal{L}(X)$.
\end{lemma}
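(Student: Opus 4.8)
The plan is to prove that every $T\in\mathcal{L}(X)$ lies in the SOT-closure of the similarity orbit ${\rm Sim}(R)=\{V^{-1}RV:V\in\mathcal{GL}(X)\}$. A basic SOT-neighbourhood of $T$ is prescribed by finitely many vectors $u_1,\dots,u_p$ and a convex symmetric neighbourhood $V$ of $0$; by the standard reduction (replace the $u_j$ by a maximal linearly independent subfamily, the others being fixed linear combinations, and shrink $V$ accordingly) we may assume $u_1,\dots,u_p$ linearly independent. So the task is to produce $S\in\mathcal{GL}(X)$ with $(S^{-1}RS-T)u_j\in V$ for $1\le j\le p$.

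The first step is to move $T$, within the neighbourhood, to a ``generic'' configuration. Since $X$ is infinite-dimensional, I would choose vectors $w_1,\dots,w_p$ one at a time so that $w_j-Tu_j\in V$ and $u_1,\dots,u_p,w_1,\dots,w_p$ are linearly independent: at stage $j$ the vectors already chosen span a finite-dimensional subspace, which is closed and, being proper in the infinite-dimensional $X$, has empty interior (a neighbourhood of $0$ is absorbing), so $Tu_j+V$ is not contained in it and a suitable $w_j$ exists. Next, apply the hypothesis with $N=p$ to obtain $x_1,\dots,x_p\in X$ such that $x_1,\dots,x_p,Rx_1,\dots,Rx_p$ are linearly independent.

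Now I build $S$. Define a linear isomorphism $S_0$ from ${\rm span}(u_1,\dots,u_p,w_1,\dots,w_p)$ onto ${\rm span}(x_1,\dots,x_p,Rx_1,\dots,Rx_p)$ by $S_0u_j=x_j$ and $S_0w_j=Rx_j$ (it carries a basis to a basis). Let $H$ be the finite-dimensional span of all the $u_j,w_j,x_j,Rx_j$; extending the two displayed bases to bases of $H$ extends $S_0$ to an automorphism $\Phi$ of $H$ (pure linear algebra). Finally, $H$ being finite-dimensional is complemented in $X$: Hahn--Banach in the locally convex space $X$ yields continuous functionals $h_i^*$ dual to a basis $h_i$ of $H$ (the coordinate functionals on $H$ are dominated by a continuous seminorm of $X$), so $P=\sum_i h_i\otimes h_i^*$ is a continuous projection onto $H$. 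Put $S:=\Phi P+(I-P)$, which lies in $\mathcal{GL}(X)$ with inverse $\Phi^{-1}P+(I-P)$ and agrees with $S_0$ on the domain of $S_0$. Then $S^{-1}RSu_j=S^{-1}Rx_j=w_j$ since $Sw_j=Rx_j$, hence $(S^{-1}RS-T)u_j=w_j-Tu_j\in V$, which closes the argument.

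I expect the only genuine point to be the conceptual one in the second paragraph: that one is allowed to first nudge $T$, inside the given SOT-neighbourhood, so that $u_1,\dots,u_p,w_1,\dots,w_p$ become independent — after which the family $x_1,\dots,x_p,Rx_1,\dots,Rx_p$ supplied by the hypothesis is \emph{exactly} a model into which $R$ can be conjugated to reproduce the map $u_j\mapsto w_j$ on the nose, with no residual approximation needed. The remaining ingredients are routine: verifying that a locally convex space supplies the dual functionals needed to complement a finite-dimensional subspace, and the elementary extension of an isomorphism between two subspaces of a finite-dimensional space to an automorphism of that space. Note the hypothesis is used for arbitrary $N$ precisely because $p$ (the size of the test family) is arbitrary.
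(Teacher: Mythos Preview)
The paper does not supply its own proof of this lemma: it is quoted from \cite{BM} (p.~45) and used as a black box in the proof of Proposition~\ref{prop:dens}. So there is nothing in the paper to compare against beyond the citation.

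That said, your argument is correct and is essentially the standard proof one finds in \cite{BM}. The only points worth flagging are cosmetic. First, you silently use that $X$ is infinite-dimensional when perturbing $Tu_j$ to $w_j$; this is indeed implied by the hypothesis (for every $N$ there are $2N$ linearly independent vectors), but it is worth saying. Second, the Hahn--Banach step producing the continuous projection $P$ onto $H$ tacitly assumes $X$ is Hausdorff, so that the finite-dimensional subspace $H$ carries the Euclidean topology and its coordinate functionals are continuous before extension; this is the usual standing assumption and is harmless here. With those two remarks made explicit, the proof is complete and matches the source.
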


\begin{proof}[Proof of Proposition \ref{prop:dens}] Recall that the similarity orbit of an operator $R \in \mathcal{L}(X)$ is defined by 
$$ {\rm Sim}(R)=\{JRJ^{-1}, J \in \mathcal{G}\mathcal{L}(X) \}.$$
It is obvious to check that $\mathcal{A}_X$ is invariant under similarity: indeed, with the notations of Theorem \ref{thm:main}, observe that if $S=JRJ^{-1}$, then $x \in A_R \Leftrightarrow Jx \in A_S$ and $x \in B_R \Leftrightarrow Jx \in B_S$. So, it is enough to prove the SOT density of $ {\rm Sim}(R)$ for an operator $R \in \mathcal{A}_X$. For this, we will use our operator $R$ from section 3 (and Lemma \ref{lem:bm}). For example, we check it in the complex case. For $N \geqslant 1$, we put $x_1=e_3+e_4,x_2=e_5+e_6,\ldots,x_N=e_{2N+1}+e_{2N+2}$. Since $Re_k=\lambda_k e_k$ for $k \geqslant 3$, we see that for each $k$, $(x_k,Rx_k)$ is a linearly independant family and that more generally $x_1,\ldots,x_N,R(x_1),\ldots,R(x_N)$ are linearly independant. Hence $ {\rm Sim}(R)$ and therefore $\mathcal{A}_X$ is SOT dense in $\mathcal{L}(X)$. In particular, the set of operators which fail Pr{\v{a}}jitur{\v{a}}'s conjecture is SOT dense in $\mathcal{L}(X)$.
\end{proof}

On the other hand, we have:

\begin{proposition} If $X$ is a complex Banach space, then the set of operators which satisfy Pr{\v{a}}jitur{\v{a}}'s conjecture, that is
$$ \mathcal{P}=\{R \in \mathcal{L}(X), A_R=\emptyset \; \; {\rm or} \; \; A_R \; \; {\rm is} \; \; {\rm dense} \; \; {\rm in} \; \; X\} $$ is dense (for the norm operator) in $\mathcal{L}(X)$. In particular, $\mathcal{A}_X$ has empty interior.
\end{proposition}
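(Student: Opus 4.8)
The plan is to reduce the statement to the dichotomy already established at the start of Section~5.1: as soon as $r(R)\neq 1$, the operator $R$ belongs to $\mathcal{P}$. Recall why: if $r(R)<1$, the spectral radius formula forces $\|R^k\|\to 0$, hence $\|R^kx\|\to 0$ for every $x$ and $A_R=\emptyset$; if $r(R)>1$, choosing $1<a<r(R)$ gives $\|R^k\|\geqslant a^k$ for $k$ large, so $\sum_k 1/\|R^k\|<\infty$ and $A_R$ is dense by the theorem of M\"uller and Vr{\v{s}}ovsk\'y. Thus it is enough to show that $\{R\in\mathcal{L}(X):r(R)\neq 1\}$ is norm-dense in $\mathcal{L}(X)$.

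For this I would simply use scalar dilations. Given $R\in\mathcal{L}(X)$ and $\epsilon>0$, put $T_\delta=(1+\delta)R$, so that $r(T_\delta)=(1+\delta)\,r(R)$ and $\|T_\delta-R\|=\delta\|R\|$. If $r(R)=0$ (in particular if $R=0$), then $R$ already lies in $\mathcal{P}$ and nothing is to be done. If $r(R)>0$, the equation $(1+\delta)r(R)=1$ has the single root $\delta=\frac{1}{r(R)}-1$, so one can pick $\delta\in\bigl(0,\frac{\epsilon}{1+\|R\|}\bigr)$ avoiding it; then $r(T_\delta)\neq 1$, hence $T_\delta\in\mathcal{P}$, while $\|T_\delta-R\|<\epsilon$. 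This gives the norm-density of $\mathcal{P}$.

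The concluding assertion then follows formally. Every $R\in\mathcal{A}_X$ fails Pr{\v{a}}jitur{\v{a}}'s conjecture: by definition $A_R$ has non-empty interior (so $A_R\neq\emptyset$) and $B_R$ has non-empty interior while being disjoint from $A_R$ (so $A_R$ is not dense). Hence $\mathcal{A}_X\subseteq\mathcal{L}(X)\setminus\mathcal{P}$, and since $\mathcal{P}$ is norm-dense its complement has empty interior; a fortiori so does $\mathcal{A}_X$.

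There is no serious obstacle here --- the whole argument is a one-line perturbation. The only point one has to be a little careful about is that the perturbation must push $r(R)$ off the circle $|z|=1$ \emph{without} moving the operator far in norm, and the dilations $T_\delta$ do exactly that; beyond that one just quotes the spectral-radius observation and the M\"uller--Vr{\v{s}}ovsk\'y theorem already recalled in the paper. (The complex hypothesis is the standing assumption of Section~5 and is what makes the spectral radius the radius of a non-empty spectrum in that discussion.)
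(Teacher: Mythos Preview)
Your proof is correct and follows essentially the same strategy as the paper's: perturb $R$ slightly so that $r(R)\neq 1$, then invoke the dichotomy from the start of Section~5.1 (together with M\"uller--Vr{\v{s}}ovsk\'y) to land in $\mathcal{P}$. The only difference is the choice of perturbation: the paper translates the spectrum via $R_\epsilon=R+2\epsilon\lambda I$ with $|\lambda|=1$, pushing $r$ strictly above $1$, whereas you dilate it via $T_\delta=(1+\delta)R$; both are one-line manoeuvres and neither buys anything the other does not.
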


\begin{proof} Fix $\epsilon>0$ and $R \in \mathcal{L}(X)$, we can of course assume that $R \notin \mathcal{P}$. The arguments from the begining of this section show that $r(R)=1$. Let $\lambda \in \sigma(R)$ with $|\lambda|=1$. Put $R_{\epsilon}=R+2\epsilon \lambda I$, then $\|R-R_{\epsilon}\|=2\epsilon$ and $\lambda(1+2\epsilon) \in \sigma(R_{\epsilon})$. Hence $r(R_{\epsilon}) \geqslant 1+2\epsilon$ and by the spectral radius formula, we get that for $k$ large enough $\|R_{\epsilon}^k\| \geqslant (1+\epsilon)^k$, so $\sum_{k=1}^{\infty} \frac{1}{\|R_{\epsilon}^k\|}<\infty$ and $R_{\epsilon} \in \mathcal{P}$ because $A_{R_{\epsilon}}$ is dense in $X$ by M\"uller and Vr{\v{s}}ovsk\'y's theorem.
\end{proof}

\section{Acknowledgements} I would like to thank my advisor Robert Deville for suggesting this topic and his help during the elaboration of this paper, Luis Sanchez for fruitful conversations and the anonymous referee for useful comments.

\small{
}

\end{document}